\documentclass[hidelinks,onefignum,onetabnum]{siamart220329}

\usepackage[utf8]{inputenc}
\usepackage{bm}
\usepackage{mathrsfs}
\usepackage{amsmath}
\usepackage{amsfonts}
\usepackage{amssymb}
\usepackage{multirow}
\usepackage{dsfont}
\usepackage{caption}
\usepackage{subfigure}
\usepackage{enumitem}
\usepackage{booktabs}
\usepackage{comment}

\usepackage[english]{babel}
\makeatletter
\newcommand{\mylabel}[2]{#2\def\@currentlabel{#2}\label{#1}}
\makeatother

\newcommand{\mcS}{\mathcal{S}}

\newcommand{\mcD}{\mathcal{D}}
\newcommand{\mcE}{\mathcal{E}}

\newcommand{\mcA}{\mathcal{A}}

\newcommand{\RR}{\mathrm{RR}}
\newcommand{\DA}{\mathrm{DA}}

\usepackage{xcolor}

\def \bb{\mathbf{b}}

\def \phib{\bm{\phi}}

\def \Cb{\bm{C}}
\def \eb{e}

\newcommand{\calB}{{\mathcal{B}}}

\def\E{\mathbb{E}}

\def\calN{\mathcal{N}}

\def\calC{\mathcal{C}}

\def\spaceY{\mathbb{Y}}

\newtheorem{assumption}[theorem]{Assumption}

\newtheorem{example}[theorem]{Example}
\newtheorem{remark}[theorem]{Remark}
\newcommand{\creflastconjunction}{, and~}
\newsiamremark{hypothesis}{Hypothesis}
\crefname{hypothesis}{Hypothesis}{Hypotheses}

\usepackage{lineno}

\usepackage{lipsum}
\usepackage{amsfonts}
\usepackage{graphicx}
\usepackage{epstopdf}
\usepackage{algorithmic}
\ifpdf
  \DeclareGraphicsExtensions{.eps,.pdf,.png,.jpg}
\else
  \DeclareGraphicsExtensions{.eps}
\fi

\headers{}{J. Wang, F. Lu and Y. Yu}

\title{When Rough Data Helps: A Phase Transition in Convergence Rates for Kernel Recovery in Integral Operators
\thanks{Submitted to the editors DATE.\funding{The work of J. Wang is supported by the Natural Science Foundation of Hubei Province under grant 2026AFB322. The work of F. Lu is partially supported by the National Science Foundation under grants DMS-2238486 and DMS-2511283. Y. Yu would like to acknowledge the support from the National Science Foundation under grant DMS-2436624 and the Air Force Office of Scientific Research under grant FA9550-22-1-0197.}}}

\author{
Jihong Wang\thanks{School of Mathematics and Statistics, Huazhong University of Science and Technology, Wuhan 430074, China;
Hubei Key Laboratory of Engineering Modeling and Scientific Computing, Huazhong University of Science and Technology, Wuhan 430074, China.}
\and Fei Lu\thanks{Department of Mathematics, Johns Hopkins University, Baltimore, MD 21218, USA.}
\and Yue Yu\thanks{Department of Mathematics, Lehigh University, Bethlehem, PA 18015, USA.}
}

\usepackage{amsopn}

\begin{document}

\maketitle
\begin{abstract}
Learning kernels in operators from data is a fundamental task that arises in nonlocal continuum mechanics, operator learning, and interacting particle systems. A central question is how the roughness of input data impacts the accuracy of kernel recovery. We quantify the roughness of the input data via its spectral decay exponent and analyze how it determines the degree of ill-posedness of the inverse problem and, consequently, the convergence rates of the Tikhonov-regularized estimator in the small-noise limit. Within this framework, we identify a phase transition between an under-rough regime, in which rougher data improves recovery, and an over-rough regime, in which further roughening leads to slower rates. These theoretical findings are supported by numerical experiments ranging from idealized settings to more realistic configurations, with quantitative agreement in the former and broad consistency of the main trends in the latter.
\end{abstract}

\begin{keywords}
kernel recovery, kernel learning, Tikhonov regularization, spectral decay, effective dimension, data roughness, convergence rates
\end{keywords}

\begin{MSCcodes}
45Q05, 47A52, 62G20
\end{MSCcodes}


\tableofcontents

\section{Introduction}

Integral operators with kernel functions are fundamental models for nonlocal interactions and long-range dependence in scientific computing, arising in nonlocal mechanics and peridynamics~\cite{silling2000reformulation,you2021data,xu2021machine,lu2024nonparametric,d2017nonlocal,suzuki2023fractional,du2018peridynamic,burkovska2022optimization,geng2025end}, interacting particle and agent-based systems~\cite{bongini2017inferring,LZMT19,lu2021stochastic,lang2022learning,du2020mathematics,tang2024identifiability}, and linear system identification~\cite{pillonetto2010new}. In these settings, the kernel function encodes essential physical information \cite{xu2022machine,askari2008peridynamics,mengesha2013analysis,huang2022unified}: material constitutive laws in peridynamics, pairwise interaction rules in multi-agent systems, and impulse responses in dynamical systems. Its accurate determination from data is therefore fundamental to predictive modeling.
A natural inverse problem arises: given noisy input-output observations $\mcD = \{(u_m, {b}_m)\}_{m=1}^M$ of an integral operator
\begin{equation*}
    R_\phi[u](x) := \int_{\mcS} \phi(s)\, g[u](x,s)\, ds = b(x), \quad x\in\Omega,
\end{equation*}
to learn the unknown kernel function $\phi:\mcS\to\mathbb{R}$. Here $\mcS$, $\Omega\subset\mathbb{R}$ are bounded domains representing the kernel support and the observation domain, respectively, and $g[u](x,s)$ is a known interaction term dependent on the input function $u$.

There is growing empirical evidence that the properties of the input data, in particular their roughness, affect kernel recovery accuracy. In peridynamics, smooth loading conditions fail to excite high-frequency deformation modes, limiting the resolution of the recovered material kernel~\cite{wang2026monotone}; in system identification, broadband excitation is preferred over narrowband precisely because it probes a wider range of the system's frequency response~\cite{pillonetto2010new}. These observations suggest that rougher data should improve kernel recovery, but a quantitative understanding is lacking. The broad purpose of this paper is to provide a quantitative answer to the question: \textit{how does data roughness affect the accuracy of kernel estimators, and is there an optimal level of roughness?} Such an answer would guide the design of training data across the applications above, and in kernel learning problems more broadly.

\textbf{Related work.}
Driven by the growing use of machine learning methods for operator learning in scientific computing~\cite{kovachki2023neural,geng2025parallel,lu2021learning,guo2024ib}, recent work has begun to investigate how the quality of input training data affects learning accuracy. De~Hoop et al.~\cite{dehoop2023convergence} prove that rougher, less smooth training inputs lower the sample complexity of learning linear operators, and Boull\'{e} et al.~\cite{boulle2023elliptic} show that more oscillatory forcing functions improve the recovery of Green's functions. Lanthaler et al.~\cite{lanthaler2022error} establish error estimates for DeepONet operator learning that depend on the regularity of the input data, through the eigenvalue decay of the input covariance operator. However, these works address the problem of learning the operator itself (i.e., the map from input to output), rather than recovering a kernel function embedded inside an integral operator. The two tasks differ not only in their target but in their difficulty: the input-output map can be approximated directly from the observed input-output pairs, whereas the kernel $\phi$ is never observed directly and enters the data only through the integral operator it defines. Recovering it is therefore an ill-posed inverse problem \cite{donatelli2025basic,li2024preconditioned}.

Methods for this kernel recovery problem have been developed in interacting particle systems and nonlocal mechanics, where the unknown kernel is expanded in a finite set of basis functions and estimated by regularized least squares~\cite{bongini2017inferring,LZMT19,lu2021stochastic,lang2022learning,you2021data,xu2021machine}. On the theoretical side, Lu et al.~\cite{lu2024nonparametric, LLA22} show that kernel learning is an ill-posed inverse problem, characterize a function space of identifiability, and propose a data-adaptive RKHS Tikhonov regularization on that space whose estimator converges as the data mesh refines, and Zhang et al.~\cite{zhang2025minimax} derive minimax-optimal convergence rates for kernel recovery, quantifying the best achievable error as the sample size grows. Both works study how the amount of data, whether the number of samples or the observation resolution, controls the recovery error; neither addresses how the roughness of the input data affects the recovered kernel. The same holds for the broader convergence-rate theory of regularized ill-posed estimation, spanning statistical regression~\cite{CD07,blanchard2018optimal,hall2007methodology,cai2012minimax,yuan2010reproducing,crambes2009smoothing} and classical inverse problems~\cite{tikhonov1963solution,engl1996regularization,kirsch2021introduction,hansen2010discrete,cavalier2008nonparametric,bissantz2007convergence,knapik2011bayesian}: these works take the spectral decay of the forward operator as a known, given property and tie the rate to it together with a source condition, but none consider how that decay depends on the smoothness of the data. In kernel recovery, by contrast, the forward operator is built from the input data, so its spectral decay, and hence the degree of ill-posedness, is determined by the roughness of the inputs. We analyze how the convergence rate of the kernel estimator depends on the data roughness, using the small-noise analysis framework of Lang and Lu~\cite{lang2023small}, which establishes the convergence rate of the regularized estimator as the noise vanishes. 

\textbf{Main results.}
We characterize data roughness by the Fourier spectral decay rate $\alpha$ of the input functions: smaller $\alpha$ corresponds to slower decay of the Fourier coefficients and hence rougher, spectrally richer data. The operator's eigenvalue decay rate $\gamma$ depends jointly on $\alpha$ and the interaction term $g$. This map from $\alpha$ to $\gamma$ can be derived analytically for a broad class of translation-invariant operators (Lemma~\ref{lem:spectrum_general}) and determined numerically in more general settings. 

To recover the kernel, we use the Tikhonov estimator with a regularization operator taken from the class $\calC=(\mcA^\top\mcA)^{-s}$ ($s\ge0$); the exponent $s$ controls how strongly the penalty adapts to the operator spectrum, and recovers ridge regression at $s=0$ and data-adaptive RKHS regularization at $s=1$.
The convergence rate of the kernel error, obtained by decomposing the estimation error into bias and variance in the spectral domain and optimizing the regularization parameter, depends on the operator spectral decay $\gamma$ (and hence on the data roughness $\alpha$) together with the smoothness of the target kernel itself. We quantify the latter by a smoothness exponent $\beta>0$ specified by a source condition on the kernel's coefficients in the operator's eigenbasis (Assumption~\ref{assump:source}); larger $\beta$ corresponds to a smoother target. 
Our main result (Theorem~\ref{theorem:small_noise}) establishes these rates explicitly for every $s\ge0$ and reveals a phase transition in the convergence rates at a critical spectral decay $\gamma^*$ that maximizes the convergence rate. The rate formulas for two commonly used estimators are stated in the left panel of Fig.~\ref{fig:rate_vs_gamma}, while the right panel plots the convergence rate $p$ as a function of $\gamma$, where the peak at $\gamma^*$ is clearly visible.

\begin{figure}[!t]
\makebox[\textwidth][c]{%

\begin{minipage}[t]{0.50\textwidth}
    \vspace{0pt}
    \small
    \textbf{Theorem~\ref{theorem:small_noise}} (Convergence rates, $\sigma\to 0$)

    \medskip
    \textit{(i) Ridge regression} ($\calC=I$):
    \vspace{-4pt}
    \[
    \E\bigl[\|\hat{\phi}_\lambda - \phi_*\|^2 | \mcA\bigr] \lesssim
    \begin{cases}
    \sigma^{\frac{4\beta}{2\beta+\gamma+1}}, & \beta < \gamma,\\[2pt]
    \sigma^{\frac{4}{3+1/\gamma}}, & \beta > \gamma.
    \end{cases}
    \]
    \vspace{-10pt}

    \textit{(ii) Data-adaptive RKHS} ($\calC\!=\!(\mcA^\top\! \mcA)^\dagger$):
    \vspace{-4pt}
    \[
    \E\bigl[\|\hat{\phi}_\lambda - \phi_*\|^2 | \mcA\bigr] \lesssim
    \begin{cases}
    \sigma^{\frac{4\beta}{2\beta+\gamma+1}}, & \beta < 2\gamma,\\[2pt]
    \sigma^{\frac{8}{5+1/\gamma}}, & \beta > 2\gamma.
    \end{cases}
    \]
    \vspace{-6pt}

\end{minipage}%

\hspace{0.03\textwidth}%
\begin{minipage}[t]{0.47\textwidth}
    \vspace{0pt}
    \includegraphics[width=\textwidth]{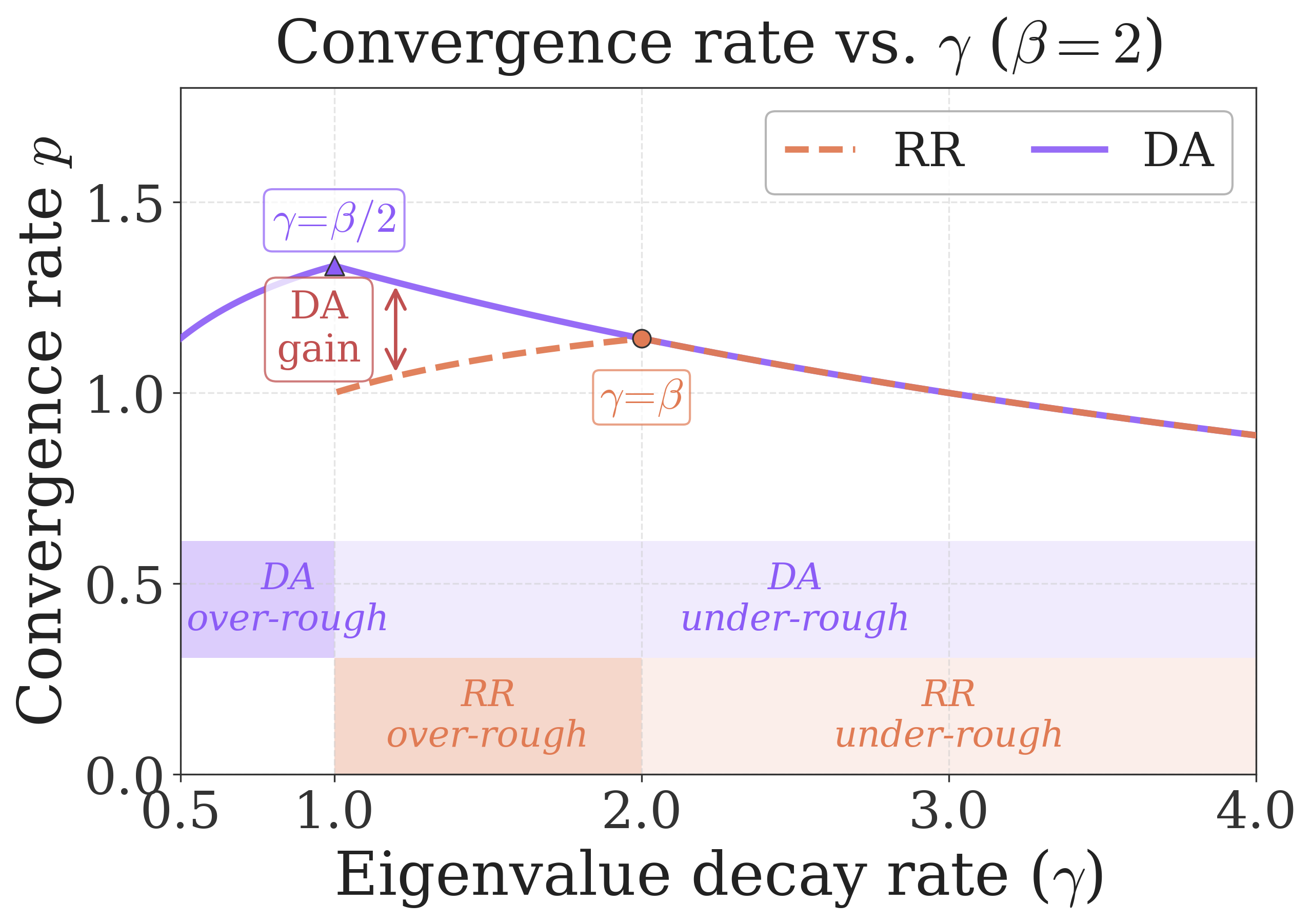}
\end{minipage}%

}%
\caption{(Left) Main convergence rates from Theorem~\ref{theorem:small_noise} for RR and DA regularizations: the kernel error scales as $\mathcal{O}(\sigma^p)$ with exponent $p$ depending on the operator spectral decay rate $\gamma$ and kernel smoothness  $\beta$. (Right) Visualization of the convergence rate $p$ vs.\ $\gamma$ at fixed $\beta=2$, showing the phase transition between the under-rough and over-rough regimes, an optimal $\gamma^*$ making the convergence rate maximal exists.}
    \label{fig:rate_vs_gamma}
\vspace{-2em} 
\end{figure}


\textbf{Contributions.}
The main contributions of this paper are as follows.

(1) We identify data roughness as a key factor impacting the accuracy of kernel recovery in integral operators. We characterize roughness through the spectral decay rate $\alpha$ of the input data, connect it to the spectral decay rate $\gamma$ of the data-induced forward operator, and derive small-noise convergence rates for the Tikhonov estimator.

(2) We discover a phase transition in the convergence rates between an under-rough regime, where rougher data improves recovery, and an over-rough regime, where further roughening becomes detrimental. This identifies an optimal data roughness that balances spectral richness against the smoothness of the target kernel, providing a principled guideline for training data design in kernel learning. We further show that a larger regularization exponent $s$ delays the over-rough regime and sharpens the convergence rate within it.

(3) We design a series of numerical experiments to validate the theoretical predictions, progressively relaxing the assumptions from idealized diagonal settings where the theory applies directly, to shared-eigenbasis integral operators and general non-aligned configurations that go beyond the theoretical framework, and finally to neural-network-based kernel learning methods, a practical setting outside our theoretical scope where similar data-roughness phenomena emerge.

\textbf{Notation.}
Table~\ref{tab:notation} summarizes the main notations used throughout the paper.
\begin{table}[htbp]
\centering
\footnotesize
\setlength{\tabcolsep}{4pt}
\caption{Summary of notation.}
\label{tab:notation}
\begin{tabular}{@{}ll@{\quad}ll@{}}
\toprule
Symbol & Description & Symbol & Description \\
\midrule
$\phi$, $\phi_*$, $\hat{\phi}_\lambda$ & kernel; true kernel; Tikhonov estimator & $s$ & regularization exponent  \\
$u$, $b$ & input function; output response & $\lambda$ & regularization parameter \\
$g[u]$ & interaction term & $d_{\rm eff}^{(s)}(\lambda)$ & effective dimension \\
$\varepsilon$, $\sigma$ & noise; noise level & $\tilde\beta$ & kernel coefficient decay rate \\
$\mcA$ & forward operator & $\beta$ & kernel smoothness exponent \\
$\mcS$, $\Omega$ & kernel domain; output domain & $\gamma$ & spectral decay rate of $\mcA^\top \mcA$ \\
$\delta$ & kernel interaction radius & $\alpha$ & data spectral decay rate \\
$\lambda_k$, $\psi_k$ & eigenvalues/functions of $\mcA^\top \mcA$ & $\gamma^*,\,\alpha^*$ & optimal $\gamma,\alpha$ at phase transition \\
$c_k$ & eigen-coefficient $\langle\phi,\psi_k\rangle$ & $K,\,K_\phi$ & input / kernel Fourier truncation \\
$M$ & number of training pairs $(u_m,b_m)$ & $\calC$ & regularization operator  \\
$N$ & dimension of the discretized kernel space & RR & ridge regression ($\calC{=}I$) \\
$J$ & number of observation points per output & DA & data-adaptive RKHS ($\calC{=}(\mcA^\top\mcA)^\dagger$) \\
\bottomrule
\end{tabular}
\end{table}

The rest of the paper is organized as follows. Section~\ref{sec:math_framework} introduces the mathematical framework, including the problem setting, the Tikhonov regularized estimator, and the spectral characterization of data roughness. Section~\ref{sec:errbd} presents the convergence analysis in the small-noise limit and states the main theoretical results. Section~\ref{sec:numerics} provides numerical verification across diagonal, shared-eigenbasis, general, and neural-network-based settings. Section~\ref{sec:conclusion} concludes with a discussion and outlines directions for future work. 

\section{Mathematical framework}\label{sec:math_framework}

\subsection{Problem setting}\label{subsec:problem_setting}
We consider the inverse problem of learning an unknown kernel function $\phi: \mathcal{S} \to \mathbb{R}$ governing the integral operator
\begin{equation}\label{eq:model}
    R_\phi[u](x) := \int_{\mathcal{S}} \phi(s) g[u](x,s) \, ds = b(x), \quad x \in \Omega,
\end{equation}
where $u$ is the input function, $b$ is the output response, and $g[u](x,s)$ is a known interaction term dependent on $u$. Given $M$ training pairs $\{(u_m, b_m)\}_{m=1}^M$ with $b_m = R_\phi[u_m] + \varepsilon_m$ and $\varepsilon_m$ the observation noise, we collect all samples into the operator equation
\begin{equation} \label{eq:dis_model}
    \mathbf{b} = \mcA \phi + \boldsymbol{\varepsilon},
\end{equation}
where $\mathbf{b} = (b_1,\dots,b_M)$, $\boldsymbol{\varepsilon} = (\varepsilon_1,\dots,\varepsilon_M)$, and the forward operator $\mcA: L^2(\mathcal{S}) \to \spaceY$ is the bounded linear map stacking the $M$ per-sample maps
\begin{equation}\label{eq:A_def}
    (\mcA\phi)_m(x) := R_\phi[u_m](x) = \int_{\mathcal{S}} \phi(s)\, g[u_m](x, s)\, ds, \quad m=1,\dots,M,\ x\in\Omega,
\end{equation}
into the data Hilbert space $\spaceY := L^2(\Omega)^{\oplus M}$, the $M$-fold direct sum of $L^2(\Omega)$ with inner product $\langle\mathbf{f},\mathbf{g}\rangle_\spaceY := \frac{1}{M}\sum_{m=1}^M \int_\Omega f_m(x) g_m(x)\,dx$, averaged over the $M$ samples so that the empirical normal operator $\mcA^\top\mcA$ converges to a population operator as $M\to\infty$ rather than growing linearly in $M$. Its adjoint $\mcA^\top: \spaceY \to L^2(\mathcal{S})$ is defined in the standard way.

Because each observed output is a function on the continuum $\Omega$ rather than a finite set of measurements, we model the noise as $\boldsymbol{\varepsilon} = \sigma\,\dot{\mathbf{W}}$, the function-space counterpart of i.i.d.\ Gaussian noise, where $\sigma>0$ is the noise level and $\dot{\mathbf{W}}$ is a Gaussian white-noise process on $\spaceY$ independent of $\mcA$; that is, $\boldsymbol{\varepsilon}$ is a centered Gaussian random linear functional on $\spaceY$ with covariance
\begin{equation}\label{eq:noise_cov}
\E\bigl[\langle\boldsymbol{\varepsilon},\mathbf{f}\rangle\langle\boldsymbol{\varepsilon},\mathbf{g}\rangle \,\big|\, \mcA\bigr] = \frac{\sigma^2}{M} \langle \mathbf{f},\mathbf{g}\rangle_\spaceY, \qquad \forall\,\mathbf{f},\mathbf{g}\in\spaceY,
\end{equation}
abbreviated as $\E[\boldsymbol{\varepsilon}|\mcA]=0$ and $\E[\boldsymbol{\varepsilon}\boldsymbol{\varepsilon}^\top|\mcA]=\tfrac{\sigma^2}{M} I$ with $I$ the identity on $\spaceY$. Here, $\langle\cdot,\cdot \rangle$ denotes the dual operation. The prefactor $1/M$ comes from the sample averaging built into $\langle\cdot,\cdot\rangle_\spaceY$.
Since $M$ is held fixed in the small-noise analysis where $\sigma\to0$, the factor $1/M$ enters the multiplicative constants and don't affect the convergence rates. Since $\boldsymbol{\varepsilon}$ does not belong to $\spaceY$, neither does $\mathbf{b}$, equation~\eqref{eq:dis_model} is read in the weak sense: for every $\mathbf{f}\in\spaceY$,  $\langle\mathbf{b},\mathbf{f}\rangle = \langle\mcA\phi,\mathbf{f}\rangle_\spaceY + \langle\boldsymbol{\varepsilon},\mathbf{f}\rangle$. 

Both the regularized estimator below and the bias-variance analysis are stated in terms of the spectral structure of the normal operator $\mcA^\top \mcA: L^2(\mathcal{S}) \to L^2(\mathcal{S})$, which is compact and positive semi-definite. Its eigenvalues $\{\lambda_k\}_{k\ge 1}$ form a non-negative sequence converging to zero, and the corresponding eigenfunctions $\{\psi_k\}_{k\ge 1}$ form an orthonormal system in $L^2(\mathcal{S})$.


\subsection{Tikhonov regularized estimator}\label{subsec:tikhonov}

Given that the inverse problem in \eqref{eq:dis_model} is typically ill-posed, we employ the Tikhonov regularization method to obtain a stable solution. Let us briefly recall the formulation of this classical approach. The Tikhonov estimator balances data fidelity with a regularity constraint by minimizing the following penalized least squares functional:
\begin{equation} \label{eq:loss_func}
    \mathcal{E}_\lambda(\phi) = \|\mcA\phi - \mathbf{b}\|_\spaceY^2 + \lambda^2 \|\phi\|_{\calC}^2.
\end{equation}
The data-fidelity term is the squared $\spaceY$-norm. Under the white-noise model of Section~\ref{subsec:problem_setting} this term is formally infinite; the minimization is then understood through its equivalent variational form
\begin{equation} \label{eq:loss_func_weak}
    \arg\min_\phi \mathcal{E}_\lambda(\phi) = \arg\min_\phi \Big\{ \langle \mcA^\top\mcA\,\phi, \phi\rangle_{L^2(\mathcal{S})} - 2\langle \mcA^\top\mathbf{b}, \phi\rangle_{L^2(\mathcal{S})} + \lambda^2 \|\phi\|_{\calC}^2 \Big\},
\end{equation}
in which every term is well defined since $\mcA^\top\mathbf{b}\in L^2(\mathcal{S})$ almost surely. The regularization term is defined via the inner product $\|\phi\|_{\calC}^2 = \langle \calC\phi, \phi \rangle_{L^2(\mathcal{S})}$, where $\calC: L^2(\mathcal{S}) \to L^2(\mathcal{S})$ is a self-adjoint, positive semi-definite operator. The parameter $\lambda > 0$ controls the trade-off between fitting the noisy data and enforcing the prior smoothness encoded by $\calC$.

The explicit solution to the minimization problem \eqref{eq:loss_func} is given by the normal equations, yielding the closed-form estimator:
\begin{equation} \label{eq:phi_Tikh}
    \hat{\phi}_\lambda = (\mcA^\top \mcA + \lambda^2 \calC)^{\dagger} \mcA^\top \bb.
\end{equation}
In this work, we consider the class of regularization operators
\begin{equation}\label{eq:Cs_family}
    \calC = (\mcA^\top \mcA)^{-s}, \qquad s \ge 0,
\end{equation}
where the exponent $s$ controls how strongly the penalty adapts to the operator spectrum, ranging from a non-adaptive penalty at $s=0$ to increasingly data-adaptive penalties as $s$ grows. Two choices of $s$ are of particular interest: $s=0$ (i.e., $\calC=I$), classical ridge regression (RR), and $s=1$ (i.e., $\calC=(\mcA^\top\mcA)^\dagger$), the standard form of data-adaptive RKHS regularization (DA)~\cite{LLA22,lu2024nonparametric}. The numerical experiments in Section~\ref{sec:numerics} are carried out with these two common regularization choices.

\subsection{From data roughness to operator spectral decay}

In the classical theory of regularized inverse problems, the eigenvalue decay rate of the normal operator $\mcA^\top \mcA$ is a key quantity that determines the degree of ill-posedness and, in turn, the achievable convergence rate of the estimator. A distinctive feature of the kernel learning problem \eqref{eq:dis_model} is that the operator $\mcA$ is constructed from the input data $\{u_m\}$, so its spectral properties are not fixed but depend on the input functions. This raises the central question: how do the properties of the input data impact the spectral decay of $\mcA^\top \mcA$, and with it the accuracy of kernel recovery?

We approach this question from two sides. First, on the operator side, we identify the eigenvalue decay rate $\gamma$ of $\mcA^\top \mcA$ as the key quantity controlling recovery accuracy. Second, on the data side, we parametrize input roughness by a spectral decay rate $\alpha$ and study its link to $\gamma$.

\textbf{Spectral characterization of operator ill-posedness.}
Consider the spectral decomposition of the normal operator:
\begin{equation}
\mcA^\top \mcA = \sum_{k \ge 1} \lambda_k \, \psi_k \otimes \psi_k,
\end{equation}
where $\{\lambda_k\}_{k\ge 1}$ is the non-increasing sequence of eigenvalues (converging to zero) and $\{\psi_k\}_{k\ge 1}$ is the corresponding orthonormal basis of eigenfunctions in $L^2(\mathcal{S})$.
We assume that the eigenvalue decay follows a polynomial rate.

\begin{assumption}[Operator spectral decay]\label{assump:operator_spectral_decay}
The eigenvalues of $\mcA^\top \mcA$ decay polynomially: there exists a decay rate parameter $\gamma > 0$ and constants $0 < \underline{c} \le \overline{c}$ such that:
\begin{equation*}
    \underline{c}\,k^{-\gamma} \le \lambda_k \le \overline{c}\,k^{-\gamma}, \quad k \ge 1.
\end{equation*}
Since our convergence analysis requires $\sum_k\lambda_k^{s+1}<\infty$, where $s$ is the regularization exponent of \eqref{eq:Cs_family}, we additionally assume $\gamma>1/(s+1)$.
\end{assumption}

A larger $\gamma$ corresponds to a more severely ill-posed problem.
Since $\mcA$ depends on the training data, $\gamma$ is data-dependent rather than an intrinsic operator property. To quantify this dependence, we need a measure of data roughness.

\textbf{Spectral characterization of data roughness.}
Data roughness can be quantified in various ways (e.g., Sobolev regularity or total variation), but in the present context the spectral decay rate of the input functions is the most natural choice for two reasons. 
First, the convergence-rate theory of Tikhonov regularization is conventionally formulated in the spectral domain of the forward operator~\cite{engl1996regularization,kirsch2021introduction,hansen2010discrete}, so the influence of the data on the operator eigenvalues is most directly captured in the frequency domain. 
Second, as we show below, this Fourier-based parameterization admits a clean analytical link from the data spectral decay rate to the operator spectral decay rate for a broad class of translation-invariant interaction kernels.

We adopt the following Fourier-based model for the input functions:
\begin{equation}\label{eq:data_u}
u(x):=\sum_{k=1}^{K} X_k k^{-\alpha} (\cos(c_{\delta} k x)+\sin(c_{\delta} k x)) , \quad K\in\mathbb{N}\cup\{+\infty\},
\end{equation}
where $\{X_k\}_{k=1}^{K}$ are independent standard normal random variables and $c_{\delta}=2\pi/\delta$, with $\delta$ being the support size of the kernel (i.e., $\mathcal{S} = [0,\delta]$).
The parameter $\alpha > 0$ controls the rate at which the Fourier coefficients decay: a smaller $\alpha$ corresponds to richer high-frequency content, which we refer to as rougher data. We note that this single-parameter model does not capture all forms of data complexity (e.g., discontinuities or localized features); extending the analysis beyond this spectral decay framework is left for future work.

\begin{remark}\label{rmk:data_model_generality}
The specific frequency choice $c_\delta = 2\pi/\delta$ is for analytical convenience: it aligns the Fourier modes with the kernel support and is what enables the closed-form $\alpha\mapsto\gamma$ relation in Lemma~\ref{lem:spectrum_general}. For data outside this Fourier-aligned structure, the closed-form is lost, but the operator spectral decay rate $\gamma$ can still be determined numerically by fitting the eigenvalues of $\mcA^\top \mcA$.
\end{remark}

\textbf{From data roughness to operator ill-posedness.}
How does the data roughness parameter $\alpha$ determine the operator spectral decay rate $\gamma$? Since $\mcA$ is built from the random training inputs $\{u_m\}_{m=1}^M$, the sample normal operator $\mcA^\top \mcA$ is itself random; a closed-form $\alpha\mapsto\gamma$ relation can therefore only be obtained in the large-sample limit $M\to\infty$. Under the averaged inner product of Section~\ref{subsec:problem_setting}, the empirical normal operator $\mcA^\top\mcA$ converges to its expectation
\begin{equation}\label{eq:L_def}
\mathcal{L}:L^2(\mathcal{S}) \to L^2(\mathcal{S}), \qquad
\langle \mathcal{L}\phi, \psi \rangle_{L^2(\mathcal{S})} := \mathbb{E}_u\!\bigl[\langle R_{\phi}[u],\, R_{\psi}[u] \rangle_{L^2(\Omega)}\bigr],
\end{equation}
as $M\to\infty$, with operator-norm sampling error of order $\mathcal{O}_p(M^{-1/2})$. Weyl's inequality then gives, for each fixed $k$,
\begin{equation}\label{eq:weyl_bridge}
\lambda_k(\mcA^\top\mcA) \;=\; \lambda_k(\mathcal{L}) + \mathcal{O}_p(M^{-1/2}),
\end{equation}
so whenever $\lambda_k(\mathcal{L}) \gg M^{-1/2}$, the eigenvalues of $\mcA^\top\mcA$ inherit both the polynomial decay rate and the spectral constants of $\mathcal{L}$ with no $M$-dependent rescaling. The bounds in Assumption~\ref{assump:operator_spectral_decay} are thus realized with the exponent $\gamma$ and the $M$-independent constants $\underline{c}, \overline{c}$ of $\mathcal{L}$ computed below. For a broad class of translation-invariant interaction terms, $\mathcal{L}$ admits a closed-form spectrum, as the following lemma shows.

\begin{lemma}[Spectrum of $\mathcal{L}$ under the shift-factorized form]\label{lem:spectrum_general}
Assume that
(i) the interaction term has the shift-factorized form
\begin{equation}\label{eq:G2-form}
g[u](x, s) = (\calB u)(x - s),
\end{equation}
where $\calB$ is a translation-invariant operator with Hermitian-symmetric Fourier symbol $\widehat{\calB}$ (see Appendix~\ref{app:lemma} for precise definitions);
(ii) the observation domain $\Omega = [0,L]$ satisfies $2L/\delta \in \mathbb{N}$.
Then the non-zero eigenvalues of $\mathcal{L}$ are
\begin{equation}\label{eq:lambda_general}
\lambda_j(\mathcal{L}) = \frac{L\delta}{2}\,|\widehat{\calB}(\theta_j)|^2\,j^{-2\alpha}, \qquad j=1,\dots,K,
\end{equation}
where $\theta_j := c_\delta\,j = 2\pi j/\delta$ is the $j$-th Fourier frequency of the data model \eqref{eq:data_u}.
If, in addition, $K=\infty$ and $\widehat{\calB}$ has polynomial decay of order $\rho$, i.e.,\ $c_1|k|^{-\rho}\le|\widehat{\calB}(k)|\le c_2|k|^{-\rho}$ as $|k|\to\infty$ for constants $0<c_1\le c_2$,
then the eigenvalues of $\mathcal{L}$ decay polynomially at rate $\gamma = 2(\alpha+\rho)$: $\underline c_{\mathcal L}\,j^{-\gamma}\le\lambda_j(\mathcal{L})\le\overline c_{\mathcal L}\,j^{-\gamma}$ for constants $0<\underline c_{\mathcal L}\le\overline c_{\mathcal L}$.
\end{lemma}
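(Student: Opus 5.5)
The plan is to compute the quadratic form $\langle\mathcal{L}\phi,\phi\rangle$ explicitly in the Fourier basis of $L^2(\mathcal S)$, $\mathcal S=[0,\delta]$, and show that $\mathcal{L}$ is diagonal there.

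\textbf{Step 1: spectral form of $\calB u$.} By translation invariance, $\calB$ acts on each Fourier mode by multiplication by its symbol. Writing $e_{\pm j}(y)=e^{\pm i\theta_j y}$, the data model \eqref{eq:data_u} has the complex form $u=\sum_j X_j j^{-\alpha}(a_j e_j+\bar a_j e_{-j})$ with $a_j=(1-i)/2$. Hence
\[
(\calB u)(y)=\sum_j X_j j^{-\alpha}\bigl(a_j\widehat{\calB}(\theta_j)e_j(y)+\overline{a_j\widehat{\calB}(\theta_j)}\,e_{-j}(y)\bigr).
\]
Here Hermitian symmetry $\widehat{\calB}(-\theta)=\overline{\widehat{\calB}(\theta)}$ guarantees that $\calB u$ is real.

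\textbf{Step 2: action on the kernel.} Substituting into \eqref{eq:G2-form} gives
\[
R_\phi[u](x)=\int_0^\delta\phi(s)(\calB u)(x-s)\,ds=\sum_j X_j j^{-\alpha}\bigl(a_j\widehat{\calB}(\theta_j)\,\overline{\hat\phi_j}\,e_j(x)+\text{c.c.}\bigr),
\]
where $\hat\phi_j=\int_0^\delta\phi(s)e^{i\theta_j s}ds$. The key point is that the frequencies $\theta_j=2\pi j/\delta$ are exactly the Fourier frequencies of $[0,\delta]$, so only the $\pm j$ coefficients of $\phi$ enter.

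\textbf{Step 3: take the expectation in $\langle\mathcal L\phi,\psi\rangle$.} Since the $X_j$ are independent standard normals, $\E[X_jX_k]=\delta_{jk}$, and
\[
\langle\mathcal L\phi,\psi\rangle=\sum_j j^{-2\alpha}\,\E\text{-free}\int_0^L\bigl(\cdots\bigr)_j^{\phi}\bigl(\cdots\bigr)_j^{\psi}\,dx.
\]
Expanding the product of the real $j$-th terms produces the cross terms $e_{2j}$ and $e_{-2j}$, together with $|a_j|^2|\widehat{\calB}(\theta_j)|^2$ terms. The oscillatory cross terms integrate to zero over $[0,L]$ because $2L/\delta\in\mathbb N$ makes $L$ an integer multiple of the period $\delta/2$ of $e^{\pm 2i\theta_j x}$. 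This is the step I expect to be the main bookkeeping obstacle, because it is exactly where condition (ii) is needed. The surviving term is $L\cdot 2|a_j|^2|\widehat{\calB}(\theta_j)|^2\,\mathrm{Re}(\hat\phi_j\overline{\hat\psi_j})$.

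\textbf{Step 4: read off the eigenvalues.} The $j$-th mode of $\phi$ lies in the two-dimensional space spanned by $\cos(\theta_j\cdot)$ and $\sin(\theta_j\cdot)$. On this space the quadratic form is $\tfrac{L}{2}|\widehat{\calB}(\theta_j)|^2 j^{-2\alpha}\,|\hat\phi_j|^2$, and $|\hat\phi_j|^2=\tfrac{\delta}{2}\bigl(\langle\phi,c_j\rangle^2+\langle\phi,s_j\rangle^2\bigr)$, where $c_j,s_j$ are the $L^2$-normalized cosine and sine. Hence $\mathcal L$ is diagonal in the normalized trigonometric basis. Its nonzero eigenvalues are $\frac{L\delta}{2}|\widehat{\calB}(\theta_j)|^2 j^{-2\alpha}$, which matches \eqref{eq:lambda_general}. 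The precise constant should be double-checked against the normalization conventions of Appendix~\ref{app:lemma}. Any nonzero constant mode, and all modes with $j>K$, lie in the kernel of $\mathcal L$.

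\textbf{Step 5: decay rate.} For $K=\infty$, the bound $c_1|k|^{-\rho}\le|\widehat{\calB}(k)|\le c_2|k|^{-\rho}$ with $k=\theta_j=2\pi j/\delta$ gives
\[
\lambda_j\asymp\frac{L\delta}{2}\Bigl(\frac{2\pi}{\delta}\Bigr)^{-2\rho}j^{-2\rho-2\alpha}\qquad\text{for large }j.
\]
The finitely many small-$j$ eigenvalues are positive provided $\widehat{\calB}(\theta_j)\neq0$, which the appendix's definition should supply. Adjusting the constants to cover them gives two-sided bounds with $\gamma=2(\alpha+\rho)$. The ordering issue, namely that $j\mapsto\lambda_j$ need not be monotone, is handled by a standard counting argument. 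Two-sided $j^{-\gamma}$ bounds on an enumeration imply the same rate for the decreasing rearrangement, with modified constants.
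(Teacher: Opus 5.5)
Your proposal is correct and follows essentially the paper's route: expand in $e^{\pm i\theta_j x}$, use $\E[X_jX_k]=\delta_{jk}$ and Hermitian symmetry, let condition (ii) kill the $e^{\pm 2i\theta_j x}$ terms, and diagonalize in the trigonometric basis of $[0,\delta]$. The only difference is that you integrate over $s$ first (computing $R_\phi[u]$), while the paper integrates over $x$ first to obtain the kernel $\kappa(s,t)=L\sum_j j^{-2\alpha}|\widehat{\calB}(\theta_j)|^2\cos(\theta_j(s-t))$. There is one small slip in Step 4: since $2|a_j|^2=1$, your Step 3 gives the quadratic form $L\,|\widehat{\calB}(\theta_j)|^2 j^{-2\alpha}|\hat\phi_j|^2$ on the $j$-th mode, not $\tfrac{L}{2}(\cdots)$ as you wrote, and with the correct factor $|\hat\phi_j|^2=\tfrac{\delta}{2}\bigl(\langle\phi,c_j\rangle^2+\langle\phi,s_j\rangle^2\bigr)$ yields exactly $\tfrac{L\delta}{2}|\widehat{\calB}(\theta_j)|^2 j^{-2\alpha}$.
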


We defer the proof of this lemma to Appendix \ref{app:lemma}.
The lemma reduces the question ``how does $\alpha$ determine $\gamma$?'' 
to inspecting the Fourier multiplier of the fixed processing operator $\calB$. Table~\ref{tab:lemma_instances} lists three natural instances covered by Lemma~\ref{lem:spectrum_general}.

\begin{table}[h]
\centering
\small
\setlength{\tabcolsep}{4pt}
\caption{Three instances of Lemma~\ref{lem:spectrum_general}.}
\label{tab:lemma_instances}
\begin{tabular}{@{}llll@{}}
\toprule
Operator & $g[u](x,s)$ & $\widehat{\calB}(k)$ & $\gamma$ \\
\midrule
Convolution & $u(x-s)$ & $1$ & $2\alpha$ \\
Convolution with smooth kernel $G$ & $(G\!*\!u)(x-s)$ & $\widehat G(k)\asymp |k|^{-\rho}$ & $2(\alpha+\rho)$ \\
Fractional smoothing & $((-\Delta)^{-\rho/2}u)(x-s)$ & $|k|^{-\rho}$ & $2(\alpha+\rho)$ \\
\bottomrule
\end{tabular}
\end{table}

An important example outside the shift-factorized class is the nonlocal operator which has $g[u](x,s) = u(x-s)+u(x+s)-2u(x)$. It is translation-invariant, but since $u(x-s)+u(x+s)-2u(x) = (T_s+T_{-s}-2I)u(x)$ is a sum of three different shifts, the associated Fourier multiplier $m_s(k) = 2\cos(ks)-2$ depends on $s$, and $g$ cannot be written as $(\calB u)(x-s)$ for a single $s$-independent operator $\calB$. Lemma~\ref{lem:spectrum_general} therefore does not apply, and no closed-form $\alpha\mapsto\gamma$ map is available; the corresponding $\gamma$ has to be determined numerically by fitting the eigenvalues of $\mcA^\top \mcA$. More broadly, for interaction terms outside the shift-factorized form \eqref{eq:G2-form}, no closed-form $\alpha\mapsto\gamma$ relation is available, and $\gamma$ is again determined numerically.

\subsection{Effective regularized dimension}

In the error analysis of regularized estimators, the effective regularized dimension~\cite{zhang2005learning} naturally arises in the variance term of the error bound as a measure of the effective complexity of the hypothesis space after regularization. It counts the number of spectral directions of $\mcA^\top \mcA$ that are effectively retained at a given regularization level $\lambda$.

\begin{definition}[Effective regularized dimension]
The effective regularized dimension of the inverse problem under the Tikhonov penalty $\lambda\|\phi\|_{\calC}^2$ is defined as:
\begin{equation}\label{eq:eff_dim_def}
    d_{\rm eff}(\lambda) = \operatorname{tr}\left( \mcA(\mcA^\top \mcA + \lambda^2 \calC)^{\dagger} \mcA^\top \right).
\end{equation}
Moreover, assume that the operators $\mcA^\top \mcA$ and $\calC$ share a common orthonormal eigenbasis $\{\psi_k\}_{k=1}^\infty$ with corresponding eigenvalues:
\[
    \mcA^\top \mcA\,\psi_k = \lambda_k\,\psi_k, \quad \calC\,\psi_k = \mu_k\,\psi_k,
\]
for $k \geq 1$. The effective dimension can then be expressed spectrally as:
\begin{equation}\label{eq:eff_dim_sum}
    d_{\rm eff}(\lambda) = \sum_{k:\lambda_k>0} \frac{\lambda_k}{\lambda_k + \lambda^2 \mu_k}.
\end{equation}
\end{definition}

Substituting the regularization class $\calC=(\mcA^\top\mcA)^{-s}$ (so $\mu_k=\lambda_k^{-s}$) from Section~\ref{subsec:tikhonov} into \eqref{eq:eff_dim_sum} gives the explicit form
\begin{equation}\label{eq:eff_dim_general}
    d_{\rm eff}^{(s)}(\lambda) = \sum_{k:\lambda_k>0} \frac{\lambda_k^{s+1}}{\lambda_k^{s+1} + \lambda^2}.
\end{equation}




Intuitively, as the regularization vanishes ($\lambda \to 0$), the estimator relies entirely on the data, and $d_{\rm eff}(\lambda)$ approaches the rank of $\mcA$. Conversely, under strong regularization ($\lambda \to \infty$), the penalty term dominates, and the effective dimension approaches zero. Thus, $d_{\rm eff}(\lambda)$ continuously interpolates between the full complexity of the unregularized inverse problem and the trivial zero solution, reflecting the trade-off between data-fit and penalization.

\section{Convergence analysis in the small-noise limit} \label{sec:errbd}

In this section, we derive error bounds for the Tikhonov estimator $\hat{\phi}_\lambda$ defined in \eqref{eq:phi_Tikh}. Our analysis focuses on the convergence behavior in the small-noise limit. Throughout this section, $\|\cdot\|_2$ denotes the $L^2(\mathcal{S})$ norm on the kernel space.

The convergence rate is governed by the interplay between the smoothness of the true kernel $\phi_*$ and the ill-posedness of the inverse problem. 
Expanding the kernel in the eigenfunctions of $\mcA^\top\mcA$, we formalize this smoothness through the standard source condition from inverse problem theory.

\begin{definition}[Smoothness exponent and smoothness classes]\label{def:smoothness}
For a function $\phi$ expanded in the orthonormal eigenbasis $\{\psi_k\}_{k\ge1}$ of $\mcA^\top\mcA$,
\begin{equation}\label{eq:source_cond}
\phi = \sum_{k\ge1}c_k\,\psi_k ,\qquad c_k:=\langle\phi,\psi_k\rangle ,
\end{equation}
the \emph{smoothness exponent} of a class $\mathcal{F}$ of such functions is
\begin{equation}\label{eq:smoothness_def}
\beta:=\sup\Bigl\{\mu>0:\ \sum_{k\ge1}k^{2\mu}c_k^2<\infty\ \text{ for all }\ \phi\in\mathcal{F}\Bigr\},
\end{equation}
the largest exponent $\mu$ for which this weighted sum stays finite across the whole class. We consider the following two specific classes,
\begin{align}
\text{Sobolev-ball class:}&\quad \mathcal{F}_{\mathrm{Sob}}^{\tilde\beta}:=\Bigl\{\phi:\ \textstyle\sum_{k\ge1} k^{2\tilde\beta}c_k^2\le\mcE\Bigr\}, \label{eq:class_sob}\\
\text{power-law class:}&\quad \mathcal{F}_{\mathrm{pow}}^{\tilde\beta}:=\Bigl\{\phi:\ a_1 k^{-\tilde\beta}\le|c_k|\le a_2 k^{-\tilde\beta},\ \forall k\ge1\Bigr\}, \label{eq:class_pow}
\end{align}
with the rate parameter $\tilde\beta>0$ for $\mathcal{F}_{\mathrm{Sob}}^{\tilde\beta}$ and $\tilde\beta>\tfrac12$ for $\mathcal{F}_{\mathrm{pow}}^{\tilde\beta}$, and constants $\mcE,a_1,a_2>0$. $\mathcal{F}_{\mathrm{Sob}}^{\tilde\beta}$ bounds only the weighted sum, constraining the coefficients on average, whereas $\mathcal{F}_{\mathrm{pow}}^{\tilde\beta}$ constrains each coefficient to an exact two-sided rate.
\end{definition}

We next make $\beta$ explicit for these two classes.

\begin{lemma}\label{lem:beta_classes}
The classes \eqref{eq:class_sob} and \eqref{eq:class_pow} have smoothness exponents
\begin{equation}\label{eq:beta_eff}
\beta=\tilde\beta\ \ \text{for }\mathcal{F}_{\mathrm{Sob}}^{\tilde\beta},\qquad
\beta=\tilde\beta-\tfrac12\ \ \text{for }\mathcal{F}_{\mathrm{pow}}^{\tilde\beta} ,
\end{equation}
with the supremum attained on $\mathcal{F}_{\mathrm{Sob}}^{\tilde\beta}$ (i.e., it is a maximum), whereas on $\mathcal{F}_{\mathrm{pow}}^{\tilde\beta}$ it is not attained.
\end{lemma}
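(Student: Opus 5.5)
The plan is to treat the two classes separately, in each case showing two things: every $\mu$ up to the claimed value belongs to the defining set in \eqref{eq:smoothness_def}, and no larger $\mu$ does. Since the set of admissible $\mu$ is an interval starting at $0$ (if $\sum k^{2\mu}c_k^2<\infty$ then the same holds for any smaller $\mu$, because $k^{2\mu'}\le k^{2\mu}$ for $k\ge1$), it suffices to identify its right endpoint and decide whether that endpoint belongs to the set.

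\emph{Sobolev-ball class.} For $\mu=\tilde\beta$, the defining inequality of $\mathcal{F}_{\mathrm{Sob}}^{\tilde\beta}$ gives $\sum_k k^{2\tilde\beta}c_k^2\le\mcE<\infty$ for every member. So $\tilde\beta$ is admissible, and the supremum is at least $\tilde\beta$. For $\mu>\tilde\beta$ we exhibit a single member with infinite weighted sum. Take coefficients $c_k=a\,k^{-\tilde\beta-1/2}(\log(k+1))^{-1}$ and $\phi=\sum_k c_k\psi_k$. Then $\sum_k k^{2\tilde\beta}c_k^2=a^2\sum_k k^{-1}(\log(k+1))^{-2}<\infty$, and we choose $a$ small so that this sum is at most $\mcE$. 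On the other hand, $\sum_k k^{2\mu}c_k^2=a^2\sum_k k^{2(\mu-\tilde\beta)-1}(\log(k+1))^{-2}=\infty$, since the exponent $2(\mu-\tilde\beta)-1$ exceeds $-1$. Hence $\beta=\tilde\beta$, and the supremum is attained.

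\emph{Power-law class.} For any member, $a_1^2k^{2\mu-2\tilde\beta}\le k^{2\mu}c_k^2\le a_2^2k^{2\mu-2\tilde\beta}$. By comparison with the $p$-series, $\sum_k k^{2\mu}c_k^2<\infty$ if and only if $2\tilde\beta-2\mu>1$, that is, $\mu<\tilde\beta-\tfrac12$. This holds uniformly for every member of the class, and the class is nonempty (take $c_k=k^{-\tilde\beta}$). So the admissible set is exactly $(0,\tilde\beta-\tfrac12)$, which is nonempty because $\tilde\beta>\tfrac12$. Its supremum is $\tilde\beta-\tfrac12$, and it is not attained: at $\mu=\tilde\beta-\tfrac12$ the lower bound gives a harmonic series, which diverges.

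The only step needing care is the construction of the borderline element for the Sobolev ball. A pure power $c_k\propto k^{-\tilde\beta-1/2}$ just fails to have finite sum at $\mu=\tilde\beta$, so the logarithmic correction is what makes the example lie in the ball while still exhibiting divergence for every $\mu>\tilde\beta$. We also implicitly assume that $\{\psi_k\}$ is infinite, so that such a $\phi$ exists in $L^2(\mathcal S)$; this holds under Assumption~\ref{assump:operator_spectral_decay}.
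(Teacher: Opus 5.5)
Your proof is correct and takes essentially the paper's route: monotonicity of the weighted sum in $\mu$, then a divergent witness beyond the threshold. It actually tightens the Sobolev-ball step, where the paper only asserts that the sum ``can diverge'' for $\mu>\tilde\beta$; your element $c_k=a\,k^{-\tilde\beta-1/2}(\log(k+1))^{-1}$ supplies that missing witness. One small fix: the nonemptiness witness for the power-law class should be $c_k=a_2k^{-\tilde\beta}$, as in the paper, because $c_k=k^{-\tilde\beta}$ lies in $\mathcal{F}_{\mathrm{pow}}^{\tilde\beta}$ only when $a_1\le 1\le a_2$.
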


\begin{proof}
For the Sobolev-ball class, when $\mu\le\tilde\beta$ we have $k^{2\mu}\le k^{2\tilde\beta}$ for every $k\ge1$, so
\[
\sum_{k\ge1} k^{2\mu}c_k^2\;\le\;\sum_{k\ge1} k^{2\tilde\beta}c_k^2\;\le\;\mcE<\infty ,
\]
whereas for $\mu>\tilde\beta$ the sum can diverge; hence $\beta=\tilde\beta$, attained. For the power-law class, the upper bound $|c_k|\le a_2 k^{-\tilde\beta}$ gives
\[
\sum_{k\ge1} k^{2\mu}c_k^2\;\le\; a_2^2\sum_{k\ge1} k^{2\mu-2\tilde\beta},
\]
finite when $2\mu-2\tilde\beta<-1$, i.e.\ $\mu<\tilde\beta-\tfrac12$. Conversely, the saturating element $c_k=a_2 k^{-\tilde\beta}$ lies in $\mathcal{F}_{\mathrm{pow}}^{\tilde\beta}$ (since $a_2\ge a_1$) and gives $\sum_{k\ge1}k^{2\mu}c_k^2 = a_2^2\sum_{k\ge1}k^{2\mu-2\tilde\beta}$, which diverges for $\mu\ge\tilde\beta-\tfrac12$. Hence $\beta=\tilde\beta-\tfrac12$, with the supremum not achieved.
\end{proof}


\begin{assumption}[Source condition]\label{assump:source}
The true kernel lies in the eigenspace of $\mcA^\top\mcA$, i.e.\ $\phi_*\in\ker(\mcA^\top\mcA)^\perp$, so that it admits the expansion \eqref{eq:source_cond}. We further assume that $\phi_*$ belongs to one of the two smoothness classes of Definition~\ref{def:smoothness}, $\mathcal{F}_{\mathrm{Sob}}^{\tilde\beta}$ or $\mathcal{F}_{\mathrm{pow}}^{\tilde\beta}$.
\end{assumption}


With these assumptions in place, we now state the main result of this paper, which gives explicit convergence rates for the regularization class \eqref{eq:Cs_family}, depending on its exponent $s$, the operator spectral decay $\gamma$ and the kernel smoothness $\beta$.

\begin{theorem}[Small-noise convergence rates]\label{theorem:small_noise}
Under Assumptions \ref{assump:operator_spectral_decay} and \ref{assump:source} together with the white-noise model of Section~\ref{subsec:problem_setting}, with the regularization operator $\calC=(\mcA^\top\mcA)^{-s}$ ($s\ge 0$) and the optimal regularization parameter $\lambda = \lambda_*$, the kernel error of the Tikhonov estimator satisfies the following upper bound as $\sigma\to 0$:
\begin{equation}\label{eq:opti_lambda_rate}
\E\left[ \| \hat{\phi}_\lambda - \phi_{*} \|_2^2 \mid \mcA\right] \le
\begin{cases}
    C_1\,\sigma^{\frac{4\beta}{2\beta+\gamma+1}}, & \gamma>\beta/(s+1) \quad \bigl(\lambda_* \asymp \sigma^{\frac{(s+1)\gamma}{2\beta+\gamma+1}}\bigr), \\[4pt]
    C_2\,\sigma^{\frac{4(s+1)}{(2s+3)+1/\gamma}}, & \gamma<\beta/(s+1) \quad \bigl(\lambda_* \asymp \sigma^{\frac{s+1}{(2s+3)+1/\gamma}}\bigr).
\end{cases}
\end{equation}
The constants $C_1, C_2$ depend on $\gamma$, $\beta$, $s$, the spectral constants in Assumption \ref{assump:operator_spectral_decay}, the source-condition constants, and the sample size $M$.
\end{theorem}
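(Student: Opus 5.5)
The proof is a spectral bias–variance computation in the common eigenbasis $\{\psi_k\}$ of $\mcA^\top\mcA$ and $\calC=(\mcA^\top\mcA)^{-s}$, followed by a choice of $\lambda$ that balances the two terms.

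\textbf{Step 1: coordinates of the estimator.} I would write the estimator \eqref{eq:phi_Tikh} coordinatewise. Since $\mcA^\top\mcA+\lambda^2\calC$ acts on $\psi_k$ as $\lambda_k+\lambda^2\lambda_k^{-s}$, and $\mcA^\top\mathbf{b}=\mcA^\top\mcA\phi_*+\mcA^\top\boldsymbol{\varepsilon}$, we get
\[
\langle\hat\phi_\lambda,\psi_k\rangle=\frac{\lambda_k^{s}}{\lambda_k^{s+1}+\lambda^2}\bigl(\lambda_k c_k+\xi_k\bigr),\qquad \xi_k:=\langle\boldsymbol{\varepsilon},\mcA\psi_k\rangle .
\]
By \eqref{eq:noise_cov}, the $\xi_k$ are centered Gaussian with $\E[\xi_k\xi_j\mid\mcA]=\frac{\sigma^2}{M}\lambda_k\delta_{kj}$. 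Using $\phi_*\in\ker(\mcA^\top\mcA)^\perp$, this gives the exact decomposition
\[
\E\bigl[\|\hat\phi_\lambda-\phi_*\|_2^2\mid\mcA\bigr]=\underbrace{\sum_k\frac{\lambda^4 c_k^2}{(\lambda_k^{s+1}+\lambda^2)^2}}_{B(\lambda)}+\underbrace{\frac{\sigma^2}{M}\sum_k\frac{\lambda_k^{2s+1}}{(\lambda_k^{s+1}+\lambda^2)^2}}_{V(\lambda)} .
\]

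\textbf{Step 2: the variance.} Set $t:=(s+1)\gamma$ and $k_\lambda:=\lambda^{-2/t}$, which is the index where $\lambda_k^{s+1}\asymp\lambda^2$ by Assumption~\ref{assump:operator_spectral_decay}. I would split the sum at $k_\lambda$.
\begin{itemize}
\item For $k\le k_\lambda$, each summand is $\lesssim \lambda_k^{-1}\asymp k^{\gamma}$, so this part contributes $\lesssim k_\lambda^{\gamma+1}$.
\item For $k>k_\lambda$, each summand is $\lesssim \lambda^{-4}k^{-(2s+1)\gamma}$. The tail converges because $(2s+1)\gamma>(2s+1)/(s+1)\ge1$ for $s\ge0$, and it contributes $\lesssim\lambda^{-4}k_\lambda^{1-(2s+1)\gamma}=k_\lambda^{\gamma+1}$.
\end{itemize}
Hence $V(\lambda)\lesssim\sigma^2\lambda^{-2(\gamma+1)/t}$. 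This is essentially a bound on a weighted form of $d^{(s)}_{\rm eff}(\lambda)$.

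\textbf{Step 3: the bias, split at $t=\beta$.}
\begin{itemize}
\item \emph{Case $t>\beta$.} For the Sobolev ball, write $B(\lambda)=\sum_k w_k(\lambda)\,k^{2\beta}c_k^2$ with $w_k=\lambda^4k^{-2\beta}/(\lambda_k^{s+1}+\lambda^2)^2$. One checks that $\sup_k w_k\lesssim k_\lambda^{-2\beta}=\lambda^{4\beta/t}$: the weight increases like $\lambda^4k^{2t-2\beta}$ up to $k_\lambda$ and then decreases like $k^{-2\beta}$. So $B\lesssim\mcE\,\lambda^{4\beta/t}$.

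For the power-law class, with $c_k^2\asymp k^{-2\beta-1}$, the split at $k_\lambda$ gives $\lambda^4\sum_{k\le k_\lambda}k^{2t-2\beta-1}+\sum_{k>k_\lambda}k^{-2\beta-1}\asymp\lambda^{4\beta/t}$.
\item \emph{Case $t<\beta$.} Bound $\lambda^4/(\lambda_k^{s+1}+\lambda^2)^2\le\lambda^4\lambda_k^{-2(s+1)}\lesssim\lambda^4k^{2t}$, so $B\lesssim\lambda^4\sum_k k^{2t}c_k^2$.

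For the power-law class this sum is $\sum_k k^{2t-2\beta-1}<\infty$. For the Sobolev class it is $\le\mcE$ since $t<\beta$. In both subcases $B\lesssim\lambda^4$.
\end{itemize}

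\textbf{Step 4: balancing.} I would equate bias and variance.
\begin{itemize}
\item In the first case, $\lambda^{4\beta/t}=\sigma^2\lambda^{-2(\gamma+1)/t}$ gives $\lambda_*=\sigma^{t/(2\beta+\gamma+1)}$ and error $\sigma^{4\beta/(2\beta+\gamma+1)}$.
\item In the second case, $\lambda^4=\sigma^2\lambda^{-2(\gamma+1)/t}$ gives $\lambda_*=\sigma^{(s+1)/((2s+3)+1/\gamma)}$ and error $\sigma^{4(s+1)/((2s+3)+1/\gamma)}$.
\end{itemize}
These match \eqref{eq:opti_lambda_rate}. The constants collect $\underline c,\overline c,\mcE,a_1,a_2,s,\gamma,\beta$ and the factor $1/M$.

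\textbf{Main obstacle.} I expect the delicate part to be the two-sided sum estimates around $k_\lambda$, for two reasons. First, the comparison constants must not depend on $\lambda$; I would handle this by comparing the sums with integrals of $x^{a}/(x^{-t}+\lambda^2)^2$ and substituting $x=k_\lambda y$. Second, the boundary case $t=\beta$ must be excluded, since there a logarithmic factor appears in the power-law bias.
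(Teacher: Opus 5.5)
Your proposal is correct and follows the paper's proof essentially step for step. It uses the same exact spectral bias--variance decomposition (Lemma~\ref{lemma:errbd}), the same case split at $(s+1)\gamma=\beta$ giving bias of order $\lambda^{4\beta/((s+1)\gamma)}$ versus $\lambda^4$ for both smoothness classes, the same variance order $\sigma^2\lambda^{-2(\gamma+1)/((s+1)\gamma)}$, and the same balancing in $\lambda$; the differences are minor: you bound the variance by splitting the sum at $k_\lambda$ directly (which needs only $(2s+1)\gamma>1$) rather than through Corollary~\ref{cor:eff_dim_esti}, and you bound the Sobolev-ball bias by a supremum of weights over the index $k$ rather than via the auxiliary function $h(x)=x^{a}/(x+\lambda^2)$.
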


The proof of this theorem is based on a bias-variance decomposition of the Tikhonov estimator in the eigenbasis of $\mcA^\top \mcA$ followed by optimization of $\lambda$, and is deferred to Appendix~\ref{app:proof_theorem}. The transition point $\gamma=\beta/(s+1)$ itself is excluded from \eqref{eq:opti_lambda_rate}: Appendix~\ref{app:proof_theorem} shows that for $\phi_*\in\mathcal{F}_{\mathrm{Sob}}^{\tilde\beta}$ the boundary rate equals the common limit of the two branches in \eqref{eq:opti_lambda_rate}, whereas for $\phi_*\in\mathcal{F}_{\mathrm{pow}}^{\tilde\beta}$ it picks up a singular extra multiplicative $|\log\sigma|$ factor exactly at the boundary that is absent for any $\gamma\ne\beta/(s+1)$.

The main qualitative feature of this result is a phase transition in the convergence rate, set by the interplay between the data roughness and the kernel smoothness.
Viewed as a function of the operator spectral decay $\gamma$, the convergence rate exponent, denoted $p(\gamma)$, is non-monotone and peaks at the threshold $\gamma^*=\beta/(s+1)$, reflecting a bias-variance trade-off; the right panel of Fig.~\ref{fig:rate_vs_gamma} visualizes this non-monotone behavior for the two common cases $s=0$ and $s=1$. We refer to $\gamma>\beta/(s+1)$ as the under-rough regime and $\gamma<\beta/(s+1)$ as the over-rough regime.
In the under-rough regime the regularization bias dominates, and roughening the data makes more high-frequency directions of the kernel observable, so $p(\gamma)$ increases. In the over-rough regime the bias has saturated and the rate becomes independent of $\beta$, so further roughening only inflates the variance and $p(\gamma)$ decreases.
Thus roughening the data helps only until its roughness matches the kernel's smoothness.

\begin{remark}[Role of the regularization exponent $s$]\label{rmk:role_s}
We remark on how the regularization exponent $s$ affects the convergence rate, now viewing the rate exponent as a function of $s$.
In the over-rough regime, a larger $s$ raises the convergence rate, since the exponent $4(s+1)/[(2s+3)+1/\gamma]$ grows with $s$. This gain is bounded, however: the peak exponent $p(s)$ increases monotonically with $s$ and, as $s\to\infty$, approaches the minimax-optimal value $4\beta/(2\beta+1)$. A larger $s$ also lowers the threshold $\gamma^*=\beta/(s+1)$, moving the peak toward smaller $\gamma$, so rougher data plays a larger role. In the under-rough regime, by contrast, the rate $\sigma^{4\beta/(2\beta+\gamma+1)}$ does not depend on $s$, so increasing $s$ brings no improvement there; we confirm this in our following numerical experiments.
\end{remark}

\section{Numerical experiments}\label{sec:numerics} 


The experiments are organized into four parts. We first validate the theory in an idealized diagonal matrix setting in Section~\ref{subsec:diagA}, where the spectral structure is fully controlled.
In Section~\ref{subsec:shared_eig}, the kernel shares the eigenbasis with the operator, and the theoretical assumptions in Theorem~\ref{theorem:small_noise} are satisfied. Section~\ref{subsec:general} treats a general case in which the kernel and data are constructed independently, so the operator and kernel need not share a common eigenbasis; this setting reflects many practical scenarios. Finally, Section~\ref{subsec:nn} extends the study to neural-network-based kernel regression, showing that the spectral decay of the data continues to influence recovery accuracy beyond our current theoretical framework. Together, these four experiments form a progression from idealized to realistic settings: Sections~\ref{subsec:diagA}--\ref{subsec:shared_eig} test Theorem~\ref{theorem:small_noise} quantitatively where its assumptions hold exactly, while Sections~\ref{subsec:general}--\ref{subsec:nn} probe the qualitative persistence of the predicted trends as the assumptions are progressively relaxed.

\textbf{Discretization.}
The theoretical analysis in Sections~\ref{sec:math_framework}--\ref{sec:errbd} is formulated for the continuous operator $\mcA: L^2(\mathcal{S}) \to \spaceY$ defined in \eqref{eq:A_def}, and Theorem~\ref{theorem:small_noise} bounds the squared continuous $L^2(\mathcal{S})$ error. The experiments are conducted on a uniform grid of spacing $h$: the kernel is discretized as $\bm\phi = (\phi_1,\dots,\phi_N) \in \mathbb{R}^N$ with $\phi_i = \phi(s_i)$ at the nodes $\{s_i\}_{i=1}^N\subset\mathcal{S}$, each input $u_m$ is sampled at the grid nodes, and the outputs are evaluated at the $J$ nodes $\{x_j\}_{j=1}^J\subset\Omega$. Approximating the integral in \eqref{eq:A_def} by the midpoint rule yields the discrete forward map
\begin{equation*}
  (A\bm\phi)_{(m,j)} \;=\; h\sum_{i=1}^N \phi_i\,g[u_m](x_j,s_i),
  \qquad A\in\mathbb{R}^{MJ\times N}.
\end{equation*}
The discrete Tikhonov estimator $\hat{\bm\phi}\in\mathbb{R}^N$ is the finite-dimensional analogue of \eqref{eq:phi_Tikh}, and the experiments report the squared discrete $L^2$ error
\begin{equation*}
  \|\hat{\bm\phi} - \bm\phi_*\|^2_h \;=\; h \sum_{i=1}^N (\hat\phi_i - \phi_{*,i})^2,
\end{equation*}
where $\bm\phi_* = (\phi_*(s_1),\dots,\phi_*(s_N))\in\mathbb{R}^N$ is the ground-truth kernel sampled on the grid. Here $\|\cdot\|_h$ denotes the discrete $L^2$ norm.

\textbf{Oracle-optimal regularization.}
We use the oracle-optimal $\lambda_*$, computed by evaluating $\hat{\bm\phi}_\lambda$ on a fine logarithmic grid of $\lambda$ and selecting the value that minimizes the squared discrete $L^2$ error $\|\hat{\bm\phi}_\lambda - \bm\phi_*\|^2_h$. We use the oracle choice because the small-noise convergence rate is a fine-grained property: a per-noise-level deviation in $\lambda$ that is negligible for any single error value can compound into a substantial bias in the fitted slope, so the oracle isolates the rate from $\lambda$-selection noise and provides a clean benchmark for Theorem~\ref{theorem:small_noise}.

\subsection{Verification of rate phase transition on diagonal matrices}\label{subsec:diagA}

We validate Theorem~\ref{theorem:small_noise} in a controlled diagonal setting where $A^\top A$ has eigenvalues $\lambda_k = k^{-\gamma}$ with standard orthonormal eigenvectors, fully decoupling the kernel from the data spectrum. In this construction, $\gamma$ directly controls the operator spectral decay as in Assumption~\ref{assump:operator_spectral_decay}, and $n$ denotes the truncation level: the number of retained spectral modes, so that $A^\top A$ is an $n\times n$ diagonal matrix with eigenvalues $\{\lambda_k\}_{k=1}^{n}$.

We compare the convergence exponents predicted by Theorem~\ref{theorem:small_noise} against empirical rates obtained by fitting $\log(\text{error})=p\cdot\log(\sigma)+c$ across noise levels, using the oracle-optimal $\lambda_*$ defined above. To complete this controlled construction, the ground-truth kernel is prescribed directly through its coefficients in the eigenbasis $\{\psi_k\}_{k=1}^n$ (rather than inherited from a data-driven model): we set the eigen-coefficients to $c_{k,*}:=\langle\phi_*,\psi_k\rangle = k^{-\tilde\beta}$, placing $\phi_*$ in the power-law class $\mathcal{F}_{\mathrm{pow}}^{\tilde\beta}$ of Definition~\ref{def:smoothness}. By Lemma~\ref{lem:beta_classes}, the source-condition decay rate is $\beta=\tilde\beta-\tfrac12$.

Fig.~\ref{fig:rate_vs_alpha_combined} plots the convergence rate $p$ as a function of the operator spectral decay rate $\gamma$ for $\beta=2.5,3.5$ (corresponding to $\tilde\beta=3,4$). The solid curves are the rates from Theorem~\ref{theorem:small_noise}, obtained by substituting the source-condition decay rate $\beta$ into \eqref{eq:opti_lambda_rate}, and the markers are empirical rates fitted from oracle-optimal inverse-problem solves at $n=2500$ spectral modes with $50$ Monte Carlo replicates, over the five noise levels $\sigma\in\{10^{-5},3\!\times\!10^{-5},10^{-4},3\!\times\!10^{-4},10^{-3}\}$. The decay rate $\gamma$ is swept over a grid in $[1,5]$; the RR markers omit $\gamma=1$, the RR validity boundary $\gamma=1/(s+1)=1$ of Assumption~\ref{assump:operator_spectral_decay}, where the effective dimension diverges. Shaded bands show $\pm 1$ standard deviation across the per-replicate slope fits; they quantify the run-to-run variability of the fitted exponent, which grows at larger $\gamma$ as the increasingly ill-posed problem makes the fit more sensitive to the noise realization. The mean fitted rates nonetheless stay on the theoretical curves across the full range of $\gamma$, with the phase-transition peaks at the predicted thresholds $\gamma^*_{\RR}=\beta$ and $\gamma^*_{\DA}=\beta/2$, i.e.,~$(\gamma^*_{\RR},\gamma^*_{\DA}) = (2.5,1.25)$ for $\beta=2.5$ and $(3.5,1.75)$ for $\beta=3.5$.
Having confirmed these rates and phase-transition thresholds in the idealized diagonal setting, the following sections test whether the agreement persists in more realistic operator configurations.

\begin{figure}[htbp]
\centering
\includegraphics[width=0.95\textwidth]{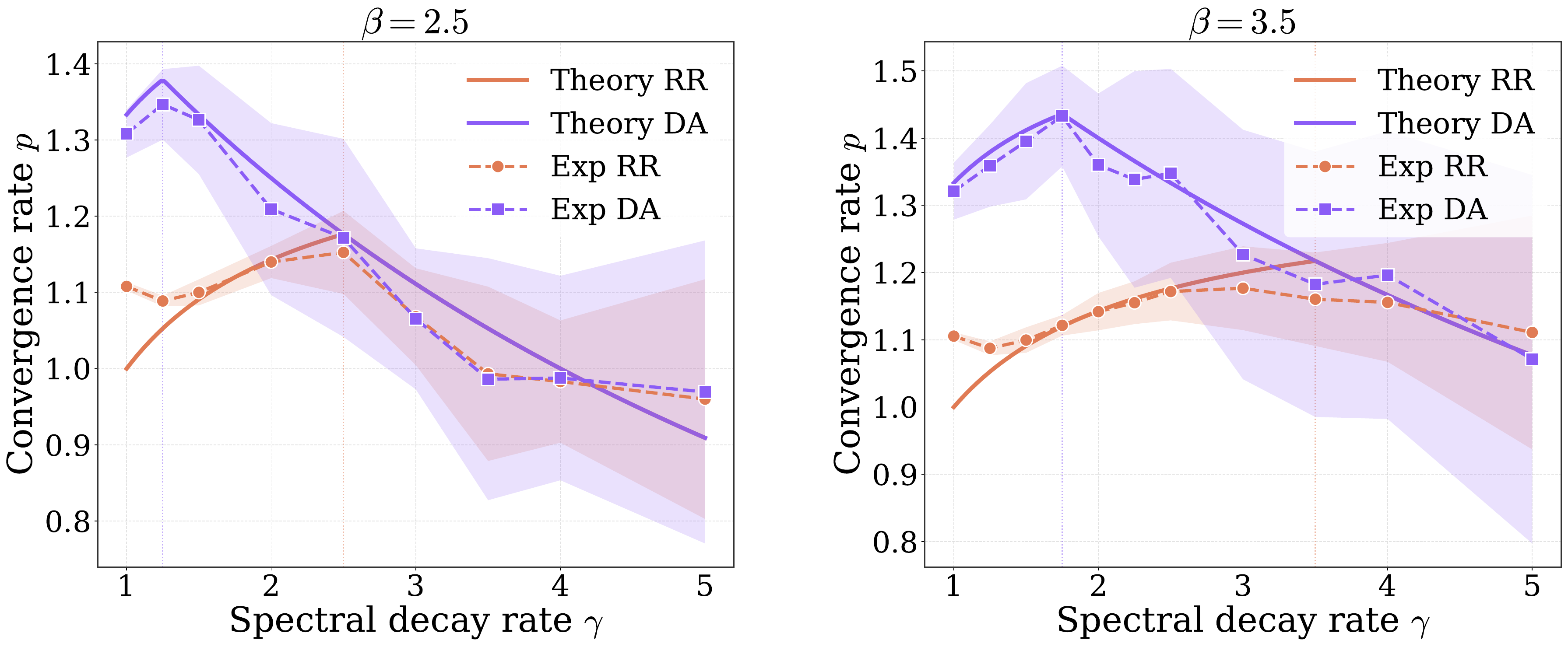}
\caption{Diagonal matrix example: convergence rate $p$ vs.\ operator spectral decay rate $\gamma$. Empirical rates closely follow the predictions of Theorem~\ref{theorem:small_noise}.}
\label{fig:rate_vs_alpha_combined}
\vspace{-1em}
\end{figure}

\subsection{Shared eigenvector case}\label{subsec:shared_eig}

This subsection tests Theorem~\ref{theorem:small_noise} on two representative integral operators whose target kernel shares the operator's eigenbasis, varying the data roughness $\alpha$ to traverse both the over-rough and under-rough branches of the predicted convergence rate.

\textbf{Operators.} We consider two representative operator instances of the model \eqref{eq:model}: the convolution operator
\begin{equation}\label{eq:op_linear}
R_\phi[u](x) = \int_{0}^{\delta} \phi(s)\, u(x-s)\,ds, \quad x\in[0,1],
\end{equation}
and the nonlocal operator
\begin{equation}\label{eq:op_nonlocal}
R_\phi[u](x) = \int_{0}^{\delta} \phi(s)\big(u(x-s)+u(x+s)-2u(x)\big)\,ds, \quad x\in[0,1].
\end{equation}
These two operators serve as a complementary pair with respect to the analytical framework of Lemma~\ref{lem:spectrum_general}: the convolution operator is shift-factorized with $\calB=I$ (so $\widehat{\calB}\equiv 1$ and $\rho=0$), and Lemma~\ref{lem:spectrum_general} predicts $\gamma=2\alpha$; the nonlocal operator is translation-invariant but not shift-factorized, so it falls outside the hypotheses of Lemma~\ref{lem:spectrum_general} and no closed-form $\gamma$ is available, and we determine $\gamma$ for it by numerically fitting the eigenvalues of $A^\top A$. We fix the kernel support size to $\delta = 0.25$ (so that $\Omega = [0,1]$ and $2/\delta = 8 \in \mathbb N$ satisfy the condition of Lemma~\ref{lem:spectrum_general}). The same two operators are also used in Sections~\ref{subsec:general}--\ref{subsec:nn}.

The two operators differ in how the kernel reconstruction error must be reported. Theorem~\ref{theorem:small_noise} is stated under the source-condition Assumption~\ref{assump:source}, which requires $\bm\phi_*\in\ker(A)^\perp$ (equivalently $P_{\ker(A)}\bm\phi_*=0$), and the Tikhonov estimator satisfies $\hat{\bm\phi}\in\mathrm{range}(A^\top)=\ker(A)^\perp$. The squared error therefore admits the orthogonal decomposition
\begin{equation*}
\|\hat{\bm\phi}-\bm\phi_*\|^2_h \;=\; \|P_{\ker(A)^\perp}(\hat{\bm\phi}-\bm\phi_*)\|^2_h \;+\; \|P_{\ker(A)}\bm\phi_*\|^2_h,
\end{equation*}
in which only the first term is governed by the rate $O(\sigma^p)$ predicted by the theorem. For the convolution operator the second term is at the level of numerical residue, so we plot $\|\hat{\bm\phi}-\bm\phi_*\|^2_h$ directly. For the nonlocal operator the second term is structurally non-negligible because the kernel argument $u(x-s)+u(x+s)-2u(x)$ is invariant under $s\to-s$, so the operator is blind to the odd-in-$s$ part of the discretized kernel and $A$ is rank-deficient; to remain within the scope of the theorem we therefore plot the in-range error $\|P_{\ker(A)^\perp}(\hat{\bm\phi}-\bm\phi_*)\|^2_h = \|\hat{\bm\phi}-\bm\phi_*\|^2_h - \|P_{\ker(A)}\bm\phi_*\|^2_h$ for the nonlocal operator.

\textbf{Setup.} Unlike the diagonal setting where $\gamma$ is prescribed independently, here $\gamma$ is genuinely determined by the input data roughness $\alpha$, and the kernel shares the Fourier eigenbasis with the operator so that the source-condition Assumption~\ref{assump:source} holds. Following the same parameterization as Section~\ref{subsec:diagA}, the target-kernel eigen-coefficients are $c_{k,*}=k^{-\tilde\beta}$, placing $\phi_*$ in the power-law class $\mathcal{F}_{\mathrm{pow}}^{\tilde\beta}$ with $\beta=\tilde\beta-\tfrac12$. The input $u$ is drawn from the spectral model \eqref{eq:data_u} with finite truncation $K<\infty$, followed by $L^\infty$-normalization
\vspace{-0.3em}
\begin{equation}\label{eq:data_u_exp}
u(x):=\tilde u(x)/\|\tilde u\|_{L^\infty([0,1])},
\vspace{-0.3em}
\end{equation}
which keeps the effective noise level approximately constant across $\alpha$ values. Noisy outputs are generated as $\bb \leftarrow A\phib_* + \mathrm{nsr}\cdot\sigma_b\cdot\bm\xi$ with $\bm\xi \sim \mathcal{N}(0,I)$, where $\sigma_b$ denotes the maximum absolute value of the noiseless output samples over all data indices $(m,j)$; in contrast to the diagonal setting of Section~\ref{subsec:diagA}, which prescribes the per-observation noise standard deviation $\sigma$ directly, here the output magnitude depends on the input data and must be normalized by $\sigma_b$, so the per-observation $\sigma = \mathrm{nsr}\cdot\sigma_b$ and sweeping $\mathrm{nsr}$ is equivalent to sweeping $\sigma$. The same noise model is used in Sections~\ref{subsec:general}--\ref{subsec:nn}. We report results at $\tilde\beta=4$, $M=100$ training pairs, and grid spacing $h=6.25\times10^{-4}$. The fitted $\gamma$ values span $\gamma\in[1.4, 9.5]$ across $\alpha\in[0.75, 5]$, so the experiment traverses both the over-rough branch ($\gamma<\beta/(s+1)$, small $\alpha$) and the under-rough branch ($\gamma>\beta/(s+1)$, large $\alpha$) of Theorem~\ref{theorem:small_noise}.

\textbf{Results.} Fig.~\ref{fig:shared_eig_error_vs_noise} plots the kernel error against the noise level $\sigma$. For both operators the fitted slopes agree well with the convergence rates of Theorem~\ref{theorem:small_noise}, and DA is consistently somewhat more accurate than RR across the tested range.
Beyond the convergence rate, the quantity we care about most is the accuracy of the recovered kernel, so we next look at how the error itself varies with the data roughness $\alpha$. Fig.~\ref{fig:shared_eig_error_vs_alpha} plots the error against $\alpha$ at fixed noise. The error first decreases with $\alpha$, reaches a minimum, then increases again, a phase transition that is clear for both convolution and nonlocal operators. The roughness that gives the smallest error, however, does not coincide with the one that gives the fastest rate. Theorem~\ref{theorem:small_noise} places the fastest rate at $\gamma^*_{\RR}=\beta$ and $\gamma^*_{\DA}=\beta/2$, that is $\alpha^*_{\RR}\approx1.75$ and $\alpha^*_{\DA}\approx0.875$ through $\gamma\approx2\alpha$, whereas the convolution error is smallest near $\alpha\approx3$ for RR and $\alpha\approx1.75$ for DA.
This gap is what one should expect at a finite noise level. The error scales like a $\gamma$-dependent prefactor times $\sigma^{p(\gamma)}$, where the exponent $p(\gamma)$ is the convergence rate. The threshold $\gamma^*$ maximizes only the exponent, but the prefactor also changes with $\gamma$, and at a nonzero $\sigma$ it is large enough to move the smallest-error roughness to a larger value. Only as $\sigma\to0$, where the $\sigma^{p(\gamma)}$ factor takes over, does the minimum return to $\gamma^*$.

In short, this shared-eigenbasis test confirms Theorem~\ref{theorem:small_noise} on two counts: the measured convergence rates match the predicted exponents for both operators, and the kernel error itself shows the predicted phase transition in the data roughness. We next ask how much of this behavior survives once the shared-eigenbasis assumption is dropped.

\begin{figure}[htbp]
    \centering
    \includegraphics[width=\textwidth]{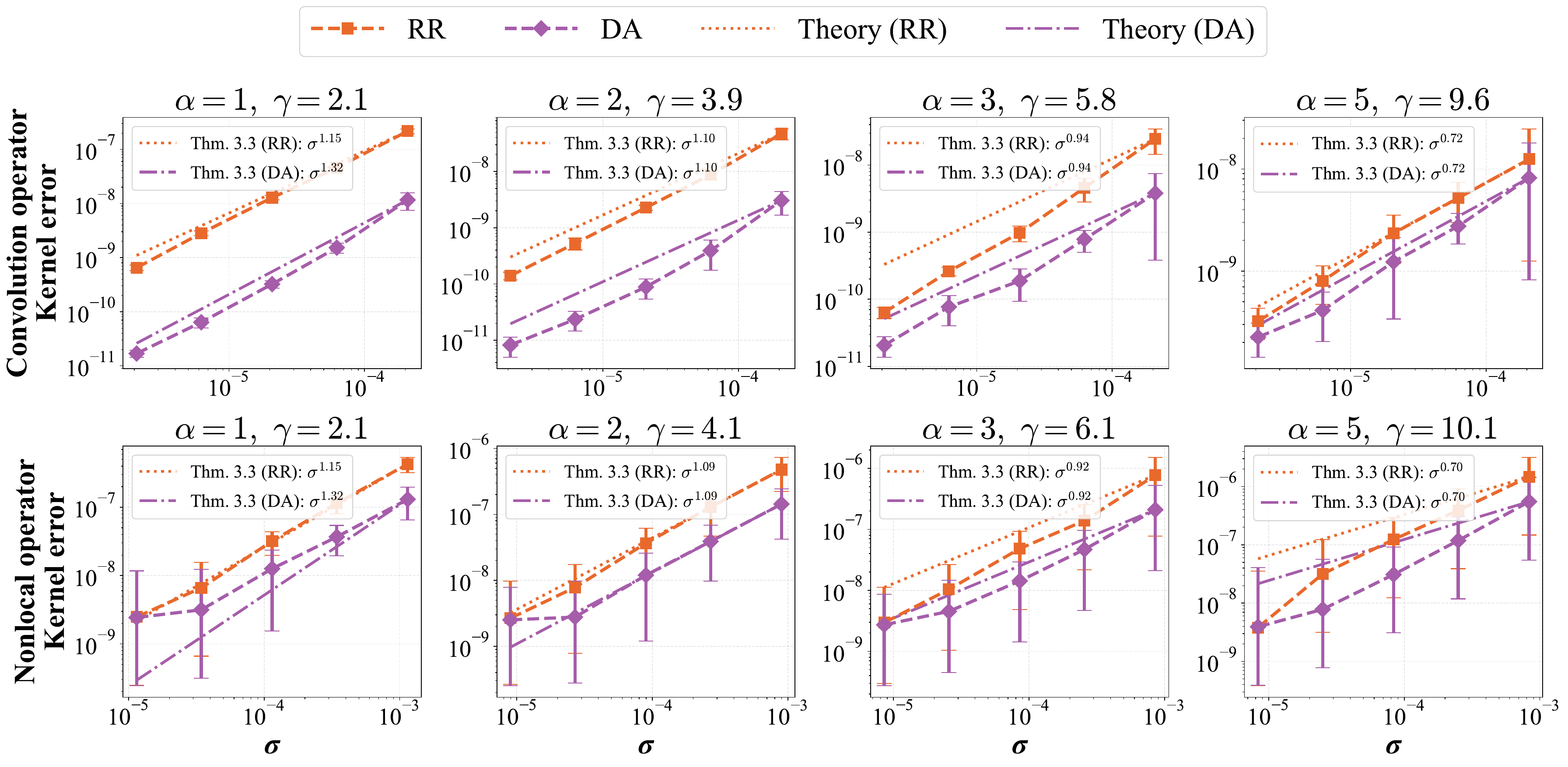}
    \vspace{-3mm} \caption{Shared eigenvector example ($\tilde\beta=4$): kernel error vs.\ noise $\sigma$. Empirically fitted RR and DA convergence rates closely match the predictions of Theorem~\ref{theorem:small_noise}.}
    \label{fig:shared_eig_error_vs_noise}
    \vspace{-2em}
\end{figure}

\begin{figure}[htbp]
    \centering
    \includegraphics[width=\textwidth]{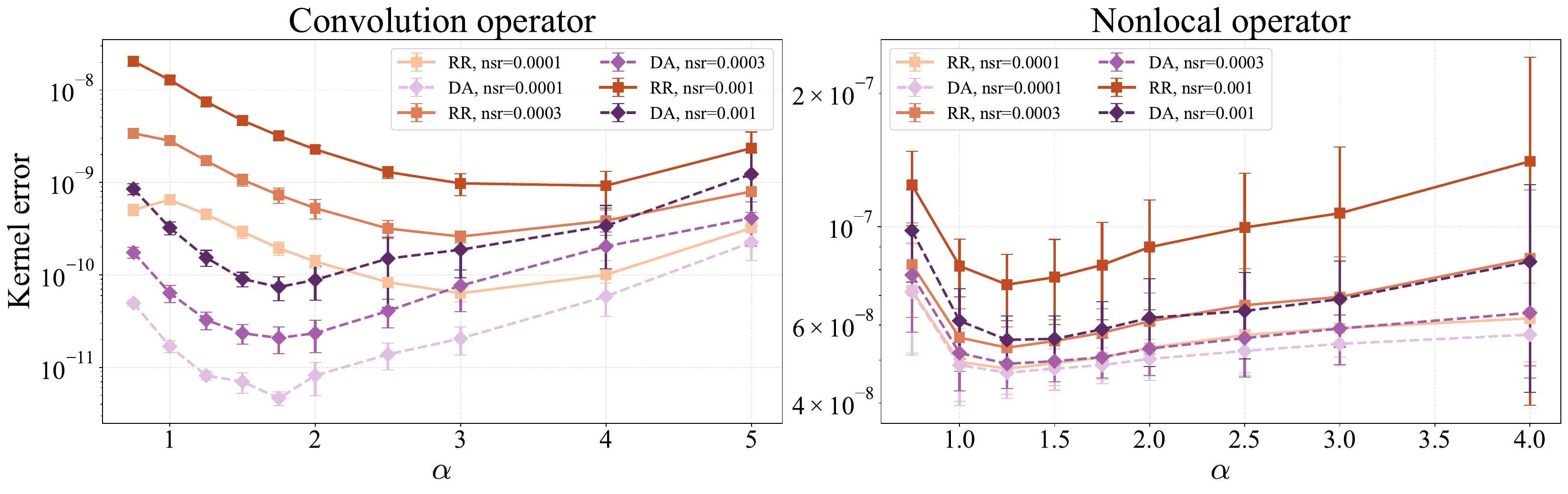}
    \vspace{-3mm} \caption{Shared eigenvector example ($\tilde\beta=4$): kernel error vs.\ data roughness $\alpha$. The error exhibits a clear V-shape phase transition.}
    \label{fig:shared_eig_error_vs_alpha}
    \vspace{-2em}
\end{figure}


\subsection{General case}\label{subsec:general}
We now relax the most restrictive assumption, namely that the target kernel and the normal operator $\mcA^\top\mcA$ share a common eigenbasis, and ask how much of this picture survives.

\textbf{Setup.} The kernel and the data are constructed independently, so $A^\top A$ and the target kernel do not share a common eigenbasis. The kernel is expanded in a fixed Fourier basis as $\phi_*(s)=\sum_{k=1}^{K_\phi} c_k\cos(2\pi k s/\delta)$ with coefficients $c_k = X_k\,k^{-\tilde\beta}$, where $X_k\overset{\mathrm{iid}}{\sim}\mathcal N(0,1)$ are independent standard Gaussian draws, while $u$ is drawn from the same spectral model \eqref{eq:data_u_exp} with decay $\alpha$. We use $\tilde\beta=4$, $K_\phi=100$, and the same operators \eqref{eq:op_linear}--\eqref{eq:op_nonlocal}. The source-condition Assumption~\ref{assump:source} is now violated: $\phi_*$ does not live in the eigenbasis of $A^\top A$, so Theorem~\ref{theorem:small_noise} does not formally apply. We therefore use the theorem only as a reference point.

\textbf{Results.} In the error-versus-noise view (Fig.~\ref{fig:general_error_vs_noise}), for every tested $\alpha$ and for both RR and DA, the error decreases monotonically as $\sigma\to 0$, so the small-noise convergence behavior is preserved; the fitted $\gamma$ values are approximately $2\alpha$ for both operators (consistent with Section~\ref{subsec:shared_eig}), and DA slopes remain steeper than RR in the under-rough regime. Figure~\ref{fig:general_error_vs_alpha} plots the error against $\alpha$ at a fixed noise level $\mathrm{nsr}=5\times10^{-4}$, and a clear V shape no longer appears. What remains is a rough but consistent trend over the tested range: the error grows with $\alpha$, so rougher data (smaller $\alpha$) gives a more accurate kernel. The trend is strongest for the nonlocal operator, where the error climbs by about two orders of magnitude from the roughest to the smoothest data, and is also clear for the convolution operator (most so for DA); only the convolution RR curve stays close to flat. A clean V is hard to resolve here for two reasons: the kernel $\phi_*$ no longer lies in the eigenbasis of $A^\top A$, so the source condition assumed by Theorem~\ref{theorem:small_noise} does not hold and the phase transition need not appear sharply, and the scatter from the random kernel draws and the limited sampling may further hide any weak minimum. We therefore do not expect quantitative agreement with the theory once its assumptions are dropped; what does come through plainly is the strong effect of data roughness on kernel recovery. 

\begin{figure}[htbp]
    \centering
    \includegraphics[width=\textwidth]{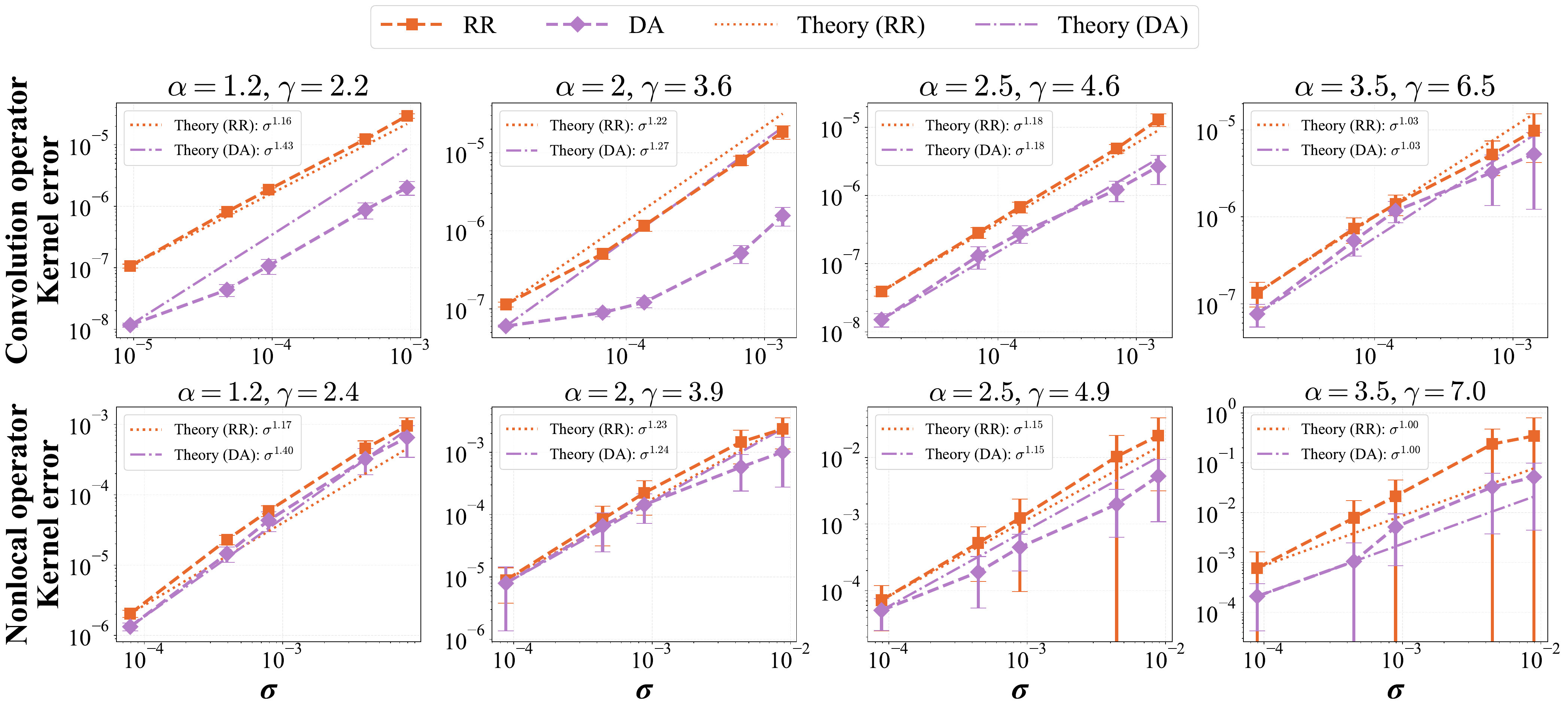}
    \vspace{-2em}\caption{General case ($\tilde\beta=4$): kernel error vs.\ noise $\sigma$. Small-noise convergence persists even without a shared eigenbasis, with DA slopes steeper than RR in the under-rough regime.}
    \label{fig:general_error_vs_noise}
    \vspace{-2em}
\end{figure}

\begin{figure}[htbp]
    \centering
    \includegraphics[width=0.85\textwidth]{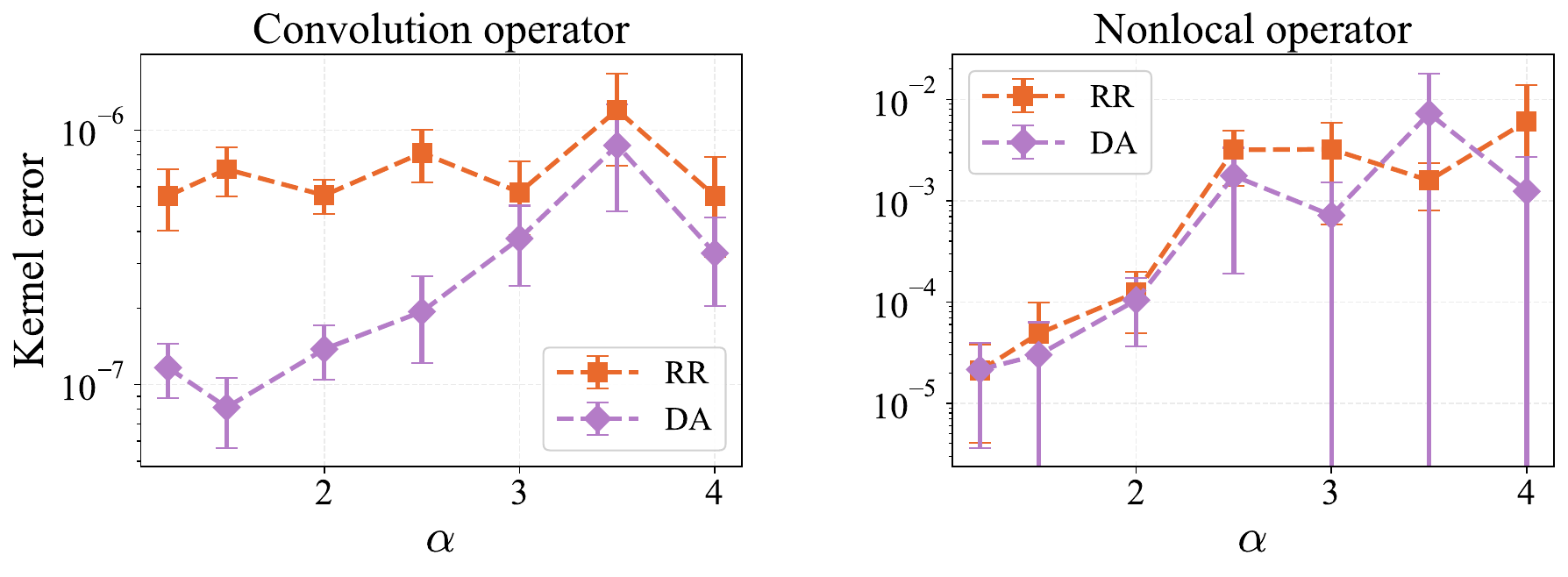} 
    \vspace{-1em}\caption{General case ($\tilde\beta=4$): kernel error vs.\ data roughness $\alpha$. The V-shape phase transition in $\alpha$ qualitatively carries over to the non-aligned setting, and DA attains uniformly lower error than RR across the tested range.}
    \label{fig:general_error_vs_alpha}
    \vspace{-2em}
\end{figure}


\subsection{Extension: Neural network kernel regression}\label{subsec:nn}
We extend the investigation to neural-network-based kernel regression, which lies outside the Tikhonov framework underlying Theorem~\ref{theorem:small_noise}. The goal here is exploratory: to check whether the qualitative effect of data roughness on kernel recovery persists when the estimator is replaced by a neural network, without attempting a quantitative match to the theoretical rates.

\textbf{Training setup.} The kernel $\phi$ is parameterized by a fully-connected feed-forward neural network $\phi_\theta: [0,\delta]\to\mathbb R$ with four hidden layers of width 128 and ReLU activations (layer widths $[1,128,128,128,128,1]$), embedded in a graph kernel network~\cite{you2021data} that evaluates $R_{\phi_\theta}[u](x)$ by a Riemann-sum discretization of the integral. The training objective is the mean-squared output residual
\begin{equation*}
    \mathcal J(\theta) \;=\; \frac{1}{MJ}\sum_{m=1}^M\sum_{j=1}^J\bigl(R_{\phi_\theta}[u_m](x_j) - b_{mj}\bigr)^2
\end{equation*}
over the training pairs $\{(u_m,b_m)\}$. Input functions $u_m$ are drawn from \eqref{eq:data_u_exp} with spectral decay $\alpha$; noisy outputs $b_m$ are generated via the two operators. We use $M=100$ training samples, with $30$ additional validation samples and $20$ test samples. Optimization uses the Adam optimizer with initial learning rate $\mathrm{lr}\in\{10^{-3}, 5\times 10^{-3}\}$ (we report the minimum error over this small grid), a two-phase exponential decay schedule ($\mathrm{lr}_t = \mathrm{lr}_0 \cdot 0.995^t$ for the first $30\%$ of training and $\mathrm{lr}_0 \cdot 0.995^{1500}\cdot 0.998^{\,t-1500}$ thereafter), zero weight decay, and batch size $10$. Networks are trained for $5000$ epochs with default PyTorch initialization. All training configurations use $\tilde\beta = 2$.

\textbf{Results.} Fig.~\ref{fig:nn} shows the kernel reconstruction error as a function of data roughness $\alpha$ at three representative noise-to-signal ratios $\mathrm{nsr}\in\{0.05, 0.1, 0.5\}$, for both operators. The pattern is qualitatively consistent across the two operators and all three noise levels: rougher input data (smaller $\alpha$) yields lower kernel error, while very smooth data leads to a sharp accuracy loss. Since the neural-network estimator lies outside the Tikhonov framework of Theorem~\ref{theorem:small_noise}, we do not attempt a quantitative comparison with the theoretical phase-transition thresholds. These results nonetheless suggest that the data-roughness effect is not an artifact of the regularized least-squares framework: it appears to reflect a more fundamental property of the kernel learning problem, and data roughness remains a practically relevant design variable regardless of the estimator used.

\begin{figure}[htbp]
  \centering
    \includegraphics[width=0.4\textwidth]{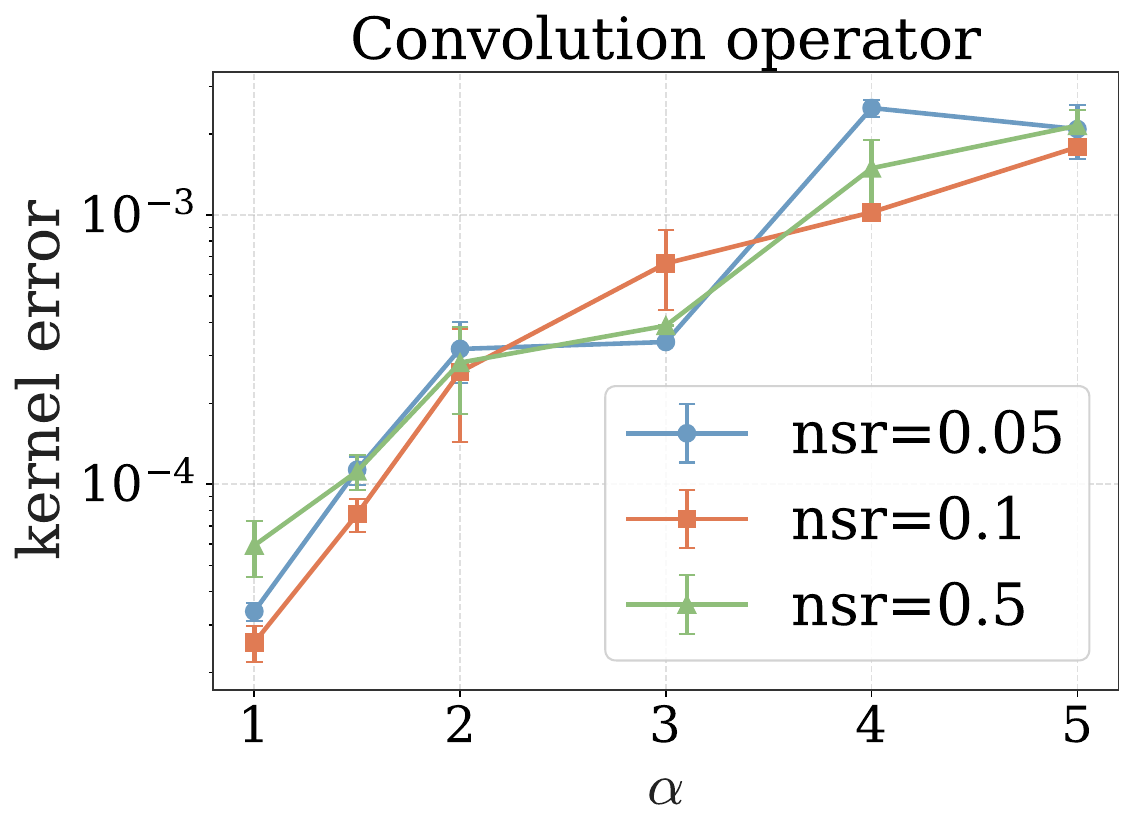}
    \includegraphics[width=0.4\textwidth]{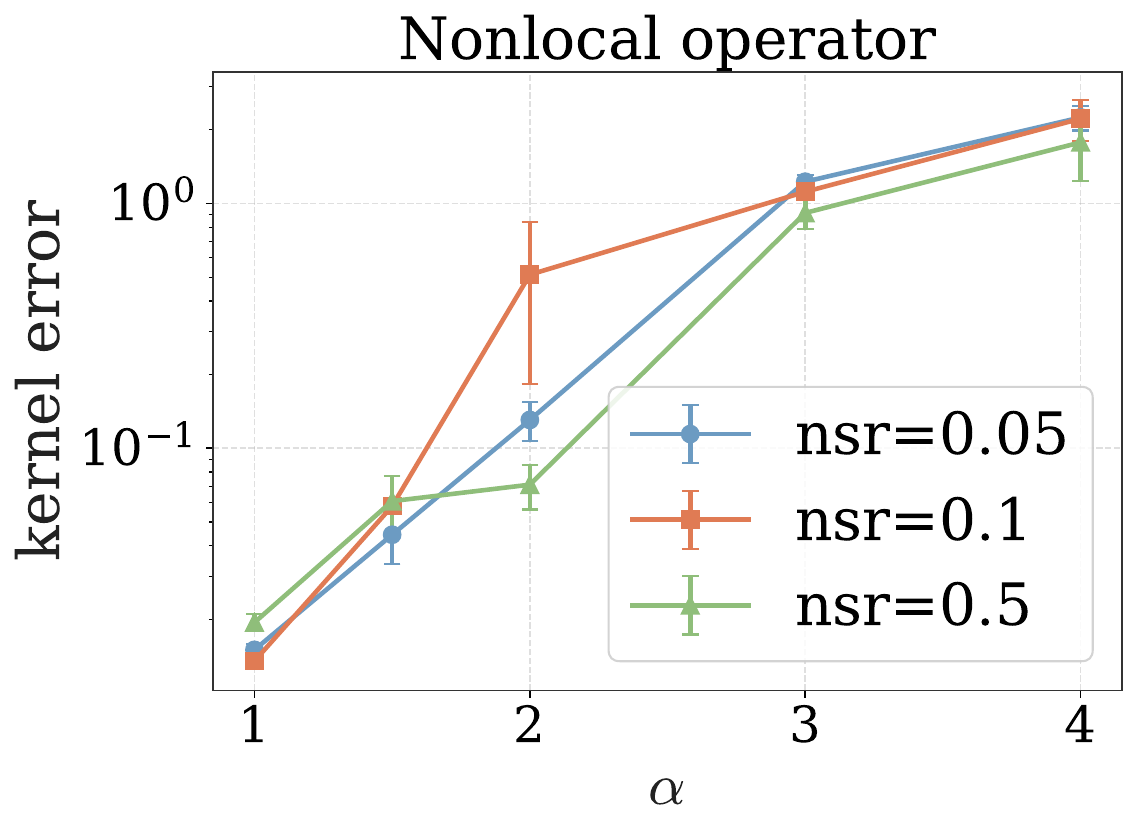}
  \vspace{-3mm} \caption{Neural network kernel regression: kernel error vs.\ data roughness $\alpha$.}
  \label{fig:nn}
    \vspace{-2em}
\end{figure}

\section{Discussion and conclusion}\label{sec:conclusion} 

This paper investigates how the roughness of input data affects kernel recovery in integral operators. Under polynomial eigenvalue decay of the forward operator and a source condition on the target kernel, the convergence rate of the Tikhonov estimator exhibits a phase transition (Theorem~\ref{theorem:small_noise}): in the under-rough regime, spectrally richer (rougher) data improves recovery, while in the over-rough regime beyond the transition threshold further roughening becomes detrimental. A larger regularization exponent $s$ delays the over-rough regime and sharpens the rate within it, while leaving the under-rough rate unchanged. Numerical experiments across multiple problem settings confirm these predictions.


Several questions remain open. First, Lemma~\ref{lem:spectrum_general} links 
the input data's spectral decay rate $\alpha$ and the operator spectral decay $\gamma$ for shift-factorized interactions with translation-invariant Fourier symbol; extending this connection beyond that class (e.g., non-translation-invariant or non-factorized interactions) would broaden the framework. Second, our analysis assumes the operator and the kernel share a common eigenbasis; extending the theory to the non-aligned case would broaden its applicability. Third, the beneficial effect of spectrally rich data is observed empirically in neural network kernel regression beyond our current theoretical scope, and developing theory for that setting is an important direction for future work.

\bibliographystyle{siamplain}
\bibliography{yyu}

\begin{thebibliography}{10}

\bibitem{askari2008peridynamics}
{\sc E.~Askari, F.~Bobaru, R.~Lehoucq, M.~Parks, S.~Silling, and O.~Weckner}, {\em Peridynamics for multiscale materials modeling}, in Journal of Physics: Conference Series, vol.~125, IOP Publishing, 2008, p.~012078.

\bibitem{bissantz2007convergence}
{\sc N.~Bissantz, T.~Hohage, A.~Munk, and F.~Ruymgaart}, {\em Convergence rates of general regularization methods for statistical inverse problems and applications}, SIAM Journal on Numerical Analysis, 45 (2007), pp.~2610--2636.

\bibitem{blanchard2018optimal}
{\sc G.~Blanchard and N.~M{\"u}cke}, {\em Optimal rates for regularization of statistical inverse learning problems}, Foundations of Computational Mathematics, 18 (2018), pp.~971--1013.

\bibitem{bongini2017inferring}
{\sc M.~Bongini, M.~Fornasier, M.~Hansen, and M.~Maggioni}, {\em Inferring interaction rules from observations of evolutive systems {I}: The variational approach}, Mathematical Models and Methods in Applied Sciences, 27 (2017), pp.~909--951.

\bibitem{boulle2023elliptic}
{\sc N.~Boulle, D.~Halikias, and A.~Townsend}, {\em Elliptic {PDE} learning is provably data-efficient}, Proceedings of the National Academy of Sciences, 120 (2023), p.~e2303904120.

\bibitem{burkovska2022optimization}
{\sc O.~Burkovska, C.~Glusa, and M.~D'elia}, {\em An optimization-based approach to parameter learning for fractional type nonlocal models}, Computers \& Mathematics with Applications, 116 (2022), pp.~229--244.

\bibitem{cai2012minimax}
{\sc T.~T. Cai and M.~Yuan}, {\em Minimax and adaptive prediction for functional linear regression}, Journal of the American Statistical Association, 107 (2012), pp.~1201--1216.

\bibitem{CD07}
{\sc A.~Caponnetto and E.~De~Vito}, {\em Optimal rates for the regularized least-squares algorithm}, Foundations of Computational Mathematics, 7 (2007), pp.~331--368.

\bibitem{cavalier2008nonparametric}
{\sc L.~Cavalier}, {\em Nonparametric statistical inverse problems}, Inverse Problems, 24 (2008), p.~034004.

\bibitem{crambes2009smoothing}
{\sc C.~Crambes, A.~Kneip, and P.~Sarda}, {\em Smoothing splines estimators for functional linear regression}, The Annals of Statistics, 37 (2009), pp.~35--72.

\bibitem{dehoop2023convergence}
{\sc M.~V. de~Hoop, N.~B. Kovachki, N.~H. Nelsen, and A.~M. Stuart}, {\em Convergence rates for learning linear operators from noisy data}, SIAM/ASA Journal on Uncertainty Quantification, 11 (2023), pp.~480--513.

\bibitem{donatelli2025basic}
{\sc J.~Donatelli, J.~Jakeman, M.~Shields, A.~Gelb, F.~Herrmann, S.~Jantre, J.~Larson, J.~Mueller, A.~Oberai, C.~Petra, et~al.}, {\em Basic research needs for inverse methods for complex systems under uncertainty}, tech. report, US Department of Energy (USDOE), Washington, DC (United States). Office of~…, 2025.

\bibitem{du2018peridynamic}
{\sc Q.~Du, Y.~Tao, and X.~Tian}, {\em A peridynamic model of fracture mechanics with bond-breaking}, Journal of Elasticity, 132 (2018), pp.~197--218.

\bibitem{du2020mathematics}
{\sc Q.~Du and X.~Tian}, {\em Mathematics of smoothed particle hydrodynamics: A study via nonlocal stokes equations}, Foundations of Computational Mathematics, 20 (2020), pp.~801--826.

\bibitem{d2017nonlocal}
{\sc M.~D’Elia, Q.~Du, M.~Gunzburger, and R.~Lehoucq}, {\em Nonlocal convection-diffusion problems on bounded domains and finite-range jump processes}, Computational Methods in Applied Mathematics, 17 (2017), pp.~707--722.

\bibitem{engl1996regularization}
{\sc H.~W. Engl, M.~Hanke, and A.~Neubauer}, {\em Regularization of inverse problems}, vol.~375, Springer, 1996.

\bibitem{geng2025end}
{\sc Y.~Geng, O.~Burkovska, L.~Ju, G.~Zhang, and M.~Gunzburger}, {\em An end-to-end deep learning method for solving nonlocal allen--cahn and cahn--hilliard phase-field models}, Computer Methods in Applied Mechanics and Engineering, 436 (2025), p.~117721.

\bibitem{geng2025parallel}
{\sc Y.~Geng, J.~Yin, E.~C. Cyr, G.~Zhang, and L.~Ju}, {\em Parallel-in-time solution of allen-cahn equations by integrating operator learning into the parareal method}, arXiv preprint arXiv:2510.07672,  (2025).

\bibitem{glusa2021fast}
{\sc C.~Glusa, H.~Antil, M.~D'Elia, B.~van Bloemen~Waanders, and C.~J. Weiss}, {\em A fast solver for the fractional helmholtz equation}, SIAM Journal on Scientific Computing, 43 (2021), pp.~A1362--A1388.

\bibitem{guo2024ib}
{\sc L.~Guo, H.~Wu, Y.~Wang, W.~Zhou, and T.~Zhou}, {\em Ib-uq: Information bottleneck based uncertainty quantification for neural function regression and neural operator learning}, Journal of Computational Physics, 510 (2024), p.~113089.

\bibitem{hall2007methodology}
{\sc P.~Hall and J.~L. Horowitz}, {\em Methodology and convergence rates for functional linear regression}, The Annals of Statistics, 35 (2007), pp.~70--91.

\bibitem{hansen2010discrete}
{\sc P.~C. Hansen}, {\em {Discrete inverse problems: Insight and algorithms}}, Society for Industrial and Applied Mathematics, 2010.

\bibitem{huang2022unified}
{\sc Y.~Huang, Q.~Li, R.~Li, F.~Zeng, and L.~Guo}, {\em A unified fast memory-saving time-stepping method for fractional operators and its applications}, Numerical Mathematics: Theory, Methods and Applications, 15 (2022), pp.~679--714.

\bibitem{kirsch2021introduction}
{\sc A.~Kirsch}, {\em An introduction to the mathematical theory of inverse problems}, vol.~120, Springer, 2021.

\bibitem{knapik2011bayesian}
{\sc B.~T. Knapik, A.~W. van~der Vaart, and J.~H. van Zanten}, {\em Bayesian inverse problems with {G}aussian priors}, The Annals of Statistics, 39 (2011), pp.~2626--2657.

\bibitem{kovachki2023neural}
{\sc N.~B. Kovachki, Z.~Li, K.~Azizzadenesheli, B.~Liu, K.~Bhattacharya, A.~M. Stuart, and A.~Anandkumar}, {\em Neural operator: {L}earning maps between function spaces with applications to {PDE}s}, Journal of Machine Learning Research, 24 (2023), pp.~1--97.

\bibitem{lang2022learning}
{\sc Q.~Lang and F.~Lu}, {\em Learning interaction kernels in mean-field equations of first-order systems of interacting particles}, SIAM Journal on Scientific Computing, 44 (2022), pp.~A260--A285.

\bibitem{lang2023small}
{\sc Q.~Lang and F.~Lu}, {\em Small noise analysis for {Tikhonov} and {RKHS} regularizations}, arXiv preprint arXiv:2305.11055,  (2023).

\bibitem{lanthaler2022error}
{\sc S.~Lanthaler, S.~Mishra, and G.~E. Karniadakis}, {\em Error estimates for {DeepONets}: A deep learning framework in infinite dimensions}, Transactions of Mathematics and Its Applications, 6 (2022), pp.~1--141.

\bibitem{li2024preconditioned}
{\sc H.~Li}, {\em A preconditioned krylov subspace method for linear inverse problems with general-form tikhonov regularization}, SIAM Journal on Scientific Computing, 46 (2024), pp.~A2607--A2633.

\bibitem{lu2024nonparametric}
{\sc F.~Lu, Q.~An, and Y.~Yu}, {\em Nonparametric learning of kernels in nonlocal operators}, Journal of Peridynamics and Nonlocal Modeling, 6 (2024), pp.~347--370.

\bibitem{LLA22}
{\sc F.~Lu, Q.~Lang, and Q.~An}, {\em Data adaptive {RKHS} {T}ikhonov regularization for learning kernels in operators}, Proceedings of Mathematical and Scientific Machine Learning, 190 (2022), pp.~158--172.

\bibitem{LZMT19}
{\sc F.~Lu, M.~Zhong, S.~Tang, and M.~Maggioni}, {\em Nonparametric inference of interaction laws in systems of agents from trajectory data}, Proceedings of the National Academy of Sciences, 116 (2019), pp.~14424--14433.

\bibitem{lu2021stochastic}
{\sc F.~Lu, M.~Zhong, S.~Tang, and M.~Maggioni}, {\em Learning interaction kernels in stochastic systems of interacting particles from multiple trajectories}, Foundations of Computational Mathematics, 22 (2022), pp.~1013--1067.

\bibitem{lu2021learning}
{\sc L.~Lu, P.~Jin, G.~Pang, Z.~Zhang, and G.~E. Karniadakis}, {\em Learning nonlinear operators via {DeepONet} based on the universal approximation theorem of operators}, Nature Machine Intelligence, 3 (2021), pp.~218--229.

\bibitem{mengesha2013analysis}
{\sc T.~Mengesha and Q.~Du}, {\em Analysis of a scalar nonlocal peridynamic model with a sign changing kernel}, Discrete and Continuous Dynamical Systems-B, 18 (2013), pp.~1415--1437.

\bibitem{pillonetto2010new}
{\sc G.~Pillonetto and G.~De~Nicolao}, {\em A new kernel-based approach for linear system identification}, Automatica, 46 (2010), pp.~81--93.

\bibitem{silling2000reformulation}
{\sc S.~A. Silling}, {\em Reformulation of elasticity theory for discontinuities and long-range forces}, Journal of the Mechanics and Physics of Solids, 48 (2000), pp.~175--209.

\bibitem{suzuki2023fractional}
{\sc J.~L. Suzuki, M.~Gulian, M.~Zayernouri, and M.~D’Elia}, {\em Fractional modeling in action: A survey of nonlocal models for subsurface transport, turbulent flows, and anomalous materials}, Journal of Peridynamics and Nonlocal modeling, 5 (2023), pp.~392--459.

\bibitem{tang2024identifiability}
{\sc S.~Tang, M.~Tuerkoen, and H.~Zhou}, {\em On the identifiability of nonlocal interaction kernels in first-order systems of interacting particles on riemannian manifolds}, SIAM Journal on Applied Mathematics, 84 (2024), pp.~2067--2086.

\bibitem{tikhonov1963solution}
{\sc A.~N. Tikhonov}, {\em Solution of incorrectly formulated problems and the regularization method}, Sov Dok, 4 (1963), pp.~1035--1038.

\bibitem{wang2026monotone}
{\sc J.~Wang, X.~Tian, Z.~Zhang, S.~Silling, S.~Jafarzadeh, and Y.~Yu}, {\em Monotone peridynamic neural operator for nonlinear material modeling with conditionally unique solutions}, Computer Methods in Applied Mechanics and Engineering, 453 (2026), p.~118792.

\bibitem{xu2021machine}
{\sc X.~Xu, M.~D'Elia, and J.~T. Foster}, {\em A machine-learning framework for peridynamic material models with physical constraints}, Computer Methods in Applied Mechanics and Engineering, 386 (2021), p.~114029.

\bibitem{xu2022machine}
{\sc X.~Xu, M.~D'Elia, C.~Glusa, and J.~T. Foster}, {\em Machine-learning of nonlocal kernels for anomalous subsurface transport from breakthrough curves}, arXiv preprint arXiv:2201.11146,  (2022).

\bibitem{you2021data}
{\sc H.~You, Y.~Yu, N.~Trask, M.~Gulian, and M.~D’Elia}, {\em Data-driven learning of nonlocal physics from high-fidelity synthetic data}, Computer Methods in Applied Mechanics and Engineering, 374 (2021), p.~113553.

\bibitem{yuan2010reproducing}
{\sc M.~Yuan and T.~T. Cai}, {\em A reproducing kernel hilbert space approach to functional linear regression}, The Annals of Statistics, 38 (2010), pp.~3412--3444.

\bibitem{zhang2025minimax}
{\sc S.~Zhang, X.~Wang, and F.~Lu}, {\em Minimax rates for learning kernels in operators}, arXiv preprint arXiv:2502.20368,  (2025).

\bibitem{zhang2005learning}
{\sc T.~Zhang}, {\em Learning bounds for kernel regression using effective data dimensionality}, Neural Computation, 17 (2005), pp.~2077--2098.

\end{thebibliography}

\appendix

\section{Proof of Lemma~\ref{lem:spectrum_general}}\label{app:lemma}

Before turning to the proof, we make precise the notions of translation-invariant operator, Fourier symbol, and the Hermitian-symmetric assumption that appear in the statement of Lemma~\ref{lem:spectrum_general}, and fix the boundary setting on $\Omega$ used throughout the calculation.

A bounded linear operator $\calB$ on $L^2(\mathbb{R})$ is translation-invariant if it commutes with translations $T_s v(x) := v(x-s)$. By the convolution theorem, any such operator acts as multiplication in the Fourier domain,
\[
\widehat{\calB v}(\xi) \;=\; \widehat{\calB}(\xi)\,\widehat{v}(\xi),\qquad v\in L^2(\mathbb{R}),
\]
where the function $\widehat{\calB}:\mathbb{R}\to\mathbb{C}$ is called the Fourier symbol of $\calB$. Typical examples are the identity ($\widehat{\calB}\equiv 1$), differentiation ($\widehat{\calB}(\xi)=i\xi$), convolution with a kernel $G$ ($\widehat{\calB}=\widehat G$), and the fractional Laplacian $(-\partial_x^2)^{\rho/2}$ ($\widehat{\calB}(\xi)=|\xi|^\rho$, see, e.g., \cite{glusa2021fast}). The Hermitian-symmetry assumption $\widehat{\calB}(-\xi) = \overline{\widehat{\calB}(\xi)}$ of Lemma~\ref{lem:spectrum_general} is equivalent to $\calB$ mapping real-valued functions to real-valued functions; at the discrete frequencies $\pm\theta_j$ appearing below it takes the form $\widehat{\calB}(-\theta_j) = \overline{\widehat{\calB}(\theta_j)}$, which we use repeatedly to recombine complex conjugate pairs into real quantities.

We fix the observation domain $\Omega = [0,L]$ with $L \in (\delta/2)\,\mathbb{N}$ throughout the proof. This choice yields $2\theta_j L = 4\pi j L/\delta \in 2\pi\mathbb{Z}$ for every $j\in\mathbb N$, ensuring that all cross-frequency boundary terms encountered below vanish exactly. The experimental setting $\Omega = [0,1]$ with $2/\delta \in \mathbb{N}$ satisfies this condition.

With these preliminaries in place, we now prove the lemma in two steps: first we compute the integral kernel of $\mathcal{L}$, then we diagonalize $\mathcal{L}$ in the Fourier basis on $[0,\delta]$. 
By the bilinear form \eqref{eq:L_def}, $\mathcal{L}$ has integral kernel
\[
\kappa(s,t) = \mathbb{E}_u\!\!\int_\Omega g[u](x,s)\,g[u](x,t)\,dx.
\]
Substituting the shift-factorized form \eqref{eq:G2-form} and the data model \eqref{eq:data_u},
\[
g[u](x,s) = (\calB u)(x-s),\qquad \calB u = \sum_{j=1}^K X_j\,j^{-\alpha}\,\calB v_j,\quad v_j(x):=\cos(\theta_j x)+\sin(\theta_j x).
\]
Writing $v_j$ in complex form $v_j(x) = \tfrac{1-i}{2}e^{i\theta_j x} + \tfrac{1+i}{2}e^{-i\theta_j x}$ and applying $\calB$ as a Fourier multiplier with symbol $\widehat{\calB}$ (using $\widehat{\calB}(-\theta_j) = \overline{\widehat{\calB}(\theta_j)}$),
\[
\calB v_j(x) = \tfrac{1-i}{2}\,\widehat{\calB}(\theta_j)\,e^{i\theta_j x} + \tfrac{1+i}{2}\,\overline{\widehat{\calB}(\theta_j)}\,e^{-i\theta_j x}.
\]
Since $\mathbb{E}[X_j X_{j'}] = \delta_{jj'}$,
\[
\kappa(s,t) = \sum_{j=1}^K j^{-2\alpha}\!\!\int_\Omega (\calB v_j)(x-s)\,(\calB v_j)(x-t)\,dx.
\]
Expanding the product yields four monomials in $\{e^{\pm i\theta_j x}\}$. The oscillatory monomials $e^{\pm 2i\theta_j x}$ integrate to zero over $\Omega$ since $2\theta_j L \in 2\pi\mathbb{Z}$; the two surviving $x$-independent monomials combine via $|(1-i)/2|^2 = 1/2$ and $\widehat{\calB}(\theta_j)\,\overline{\widehat{\calB}(\theta_j)} = |\widehat{\calB}(\theta_j)|^2$ to give
\begin{align*}
\kappa(s,t) &= L\sum_{j=1}^K j^{-2\alpha}\,|\widehat{\calB}(\theta_j)|^2\,\cos\bigl(\theta_j(s-t)\bigr) \\
&= L\sum_{j\ge 1} j^{-2\alpha}|\widehat{\calB}(\theta_j)|^2\,\bigl[\cos\theta_j s \cos\theta_j t + \sin\theta_j s \sin\theta_j t\bigr]. 
\end{align*}
The Fourier modes 
$\{\sqrt{1/\delta}, \sqrt{2/\delta}\cos\theta_j s,\,\sqrt{2/\delta}\sin\theta_j s\}_{j\ge 1}$ 
form a complete orthonormal basis of $L^2[0,\delta]$. Denote them by $\psi_{j,m}$ with $\psi_{j,1}= \sqrt{2/\delta}\cos\theta_j s$ and $\psi_{j,2}= \sqrt{2/\delta}\sin\theta_j s$. A direct computation gives, for $m\in\{1,2\}$ and $j\ge 1$,
\[
\mathcal{L}\psi_{j,m}(s) = \int_0^\delta \kappa(s,t)\,\psi_{j,m}(t)\,dt = \frac{L\delta}{2}\,|\widehat{\calB}(\theta_j)|^2\,j^{-2\alpha}\,\psi_{j,m}(s),
\]
which is \eqref{eq:lambda_general}. Moreover, the constant function $\psi_0\equiv\sqrt{1/\delta}$ lies in $\ker\mathcal{L}$, since for each $j\ge 1$, $\int_0^\delta\cos(\theta_j(s-t))\,dt=0$ by $\theta_j\delta=2\pi j$; hence the basis above exhausts the spectrum of $\mathcal{L}$. The decay rate $\gamma = 2(\alpha + \rho)$ then follows by combining \eqref{eq:lambda_general} with the two-sided bound $c_1 j^{-2\rho}\le|\widehat{\calB}(\theta_j)|^2\le c_2 j^{-2\rho}$ (from $|\theta_j|=2\pi j/\delta$ and the rate-$\rho$ decay of $\widehat{\calB}$), which gives $\underline c_{\mathcal L}\,j^{-\gamma}\le\lambda_j(\mathcal L)\le\overline c_{\mathcal L}\,j^{-\gamma}$. Each $\lambda_j(\mathcal{L})$ is doubly degenerate (eigenspace spanned by $\psi_{j,1},\psi_{j,2}$), so the eigenvalues counted with multiplicity satisfy $\lambda_k\asymp k^{-\gamma}$ (with constants enlarged by a factor of $2^\gamma$), matching Assumption~\ref{assump:operator_spectral_decay}. \qed

\section{The proof of Theorem \ref{theorem:small_noise}}\label{app:proof_theorem}

Before giving the proof of the Theorem \ref{theorem:small_noise}, we first provide an estimate of the effective dimension and a unified kernel error bound for the Tikhonov estimator.

\begin{corollary}[Bounds on the effective dimension]\label{cor:eff_dim_esti}
Let $d_{\rm eff}^{(s)}(\lambda)$ denote the effective dimension of the penalty $\calC=(\mcA^\top\mcA)^{-s}$ ($s\ge0$), defined in \eqref{eq:eff_dim_general}. Under Assumption~\ref{assump:operator_spectral_decay}, for all $0<\lambda<1$,
\begin{equation}\label{eq:esti_dim}
     C_1(\underline{c},\gamma)\, \lambda^{-\frac{2}{(s+1)\gamma}} -\tfrac{1}{2}
     \;\le\; d_{\rm eff}^{(s)}(\lambda) \;\le\;
     C_2^{(s)}(\overline{c},\gamma)\, \lambda^{-\frac{2}{(s+1)\gamma}} + 1,
\end{equation}
where $C_1(\underline{c},\gamma) = \underline{c}^{1/\gamma}/{2}$ and $C_2^{(s)}(\overline{c},\gamma)=\overline{c}^{1/\gamma}\bigl(1+\frac{1}{(s+1)\gamma-1}\bigr)$. The two common cases are ridge regression ($s=0$, $d_{\rm eff}^{RR}=d_{\rm eff}^{(0)}$) and data-adaptive RKHS ($s=1$, $d_{\rm eff}^{DA}=d_{\rm eff}^{(1)}$).
\end{corollary}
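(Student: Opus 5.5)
Writing $d_{\rm eff}^{(s)}(\lambda)=\sum_{k\ge1}f(\lambda_k)$ with $f(x)=x^{s+1}/(x^{s+1}+\lambda^2)$, increasing in $x\ge 0$, I would sandwich each $\lambda_k$ between $\underline c k^{-\gamma}$ and $\overline c k^{-\gamma}$ using Assumption~\ref{assump:operator_spectral_decay}. By monotonicity of $f$ this gives
\[
\sum_{k\ge1}\frac{\underline c^{s+1}k^{-(s+1)\gamma}}{\underline c^{s+1}k^{-(s+1)\gamma}+\lambda^2}\;\le\; d_{\rm eff}^{(s)}(\lambda)\;\le\;\sum_{k\ge1}\frac{\overline c^{s+1}k^{-(s+1)\gamma}}{\overline c^{s+1}k^{-(s+1)\gamma}+\lambda^2}.
\]
Both bounds then reduce to estimating $S(c)=\sum_k \frac{1}{1+\lambda^2 c^{-(s+1)}k^{(s+1)\gamma}}$, and the natural scale is the crossover index $k_\lambda:=c^{1/\gamma}\lambda^{-2/((s+1)\gamma)}$, at which $\lambda^2 c^{-(s+1)}k^{(s+1)\gamma}=1$.

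For the upper bound (with $c=\overline c$), I would split at $k_\lambda$. Each term with $k\le k_\lambda$ is at most $1$, so these contribute at most $\lfloor k_\lambda\rfloor\le k_\lambda$. For $k>k_\lambda$ I would bound each term by $(k/k_\lambda)^{-(s+1)\gamma}$; since the summand is decreasing, the tail sum is at most $1+\int_{k_\lambda}^\infty (t/k_\lambda)^{-(s+1)\gamma}\,dt = 1+\frac{k_\lambda}{(s+1)\gamma-1}$. Convergence of this integral is exactly where the hypothesis $\gamma>1/(s+1)$ is used. Adding the two pieces gives $k_\lambda\bigl(1+\frac{1}{(s+1)\gamma-1}\bigr)+1$, which is the stated $C_2^{(s)}$ bound. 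The "$+1$" comes from the first tail term, or equivalently from floor effects.

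For the lower bound (with $c=\underline c$), I would keep only the indices $k\le k_\lambda$. Each such term is at least $1/2$, so $S\ge \tfrac12\lfloor k_\lambda\rfloor\ge \tfrac12 k_\lambda-\tfrac12$. This gives $C_1=\underline c^{1/\gamma}/2$ with the $-\tfrac12$ correction.

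The restriction $\lambda<1$ only ensures $k_\lambda$ is of order at least one, so I do not expect it to be needed beyond making the bounds meaningful. The main obstacle I anticipate is the careful bookkeeping of floors and of the integral comparison at the crossover index, so that the constants come out exactly as stated. I would also double-check that the multiplicity convention, for instance the doubly degenerate eigenvalues from Lemma~\ref{lem:spectrum_general}, is already absorbed into $\underline c$ and $\overline c$.
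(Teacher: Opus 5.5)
Your proof is correct and yields exactly the stated constants. Its skeleton matches the paper's: a crossover index $k_\lambda$, a weight of at least $\tfrac12$ on each retained term for the lower bound, and an integral tail using $(s+1)\gamma>1$ for the upper bound. What differs is the order in which the comparison with $k^{-\gamma}$ is applied.

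The paper works with the actual eigenvalues $\mu_k=\lambda_k^{s+1}$. It introduces the counting function $m(\lambda)=\#\{k:\mu_k\ge\lambda^2\}$ and the pointwise bound $\tfrac12\mathbf 1_{\{x\ge1\}}\le\frac{x}{x+1}\le\mathbf 1_{\{x\ge1\}}+x\mathbf 1_{\{x<1\}}$. It then bounds $m(\lambda)$ and the tail $\lambda^{-2}\sum_{\mu_k<\lambda^2}\mu_k$ separately. You instead use monotonicity of $x\mapsto x^{s+1}/(x^{s+1}+\lambda^2)$ to pass first to the model sequences $\underline c\,k^{-\gamma}$ and $\overline c\,k^{-\gamma}$, and only then split. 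The paper's route keeps $m(\lambda)$, the number of retained spectral directions, in view, which fits the interpretation of $d_{\rm eff}$.

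Your route is the one that makes the upper bound airtight. The paper handles the tail via the inclusion $\{k:\mu_k<\lambda^2\}\subset\{k>K(\lambda)\}$, which does not hold in general. Since $\mu_k$ is nonincreasing, that set is exactly $\{k>m(\lambda)\}$, and $m(\lambda)\le K(\lambda)$. So the tail really starts at $m(\lambda)$, while $m(\lambda)$ itself has already been bounded by $K(\lambda)$. The repair is to keep the two pieces together: $m(\lambda)+\lambda^{-2}\sum_{k>m(\lambda)}\mu_k=\sum_k\min\{1,\mu_k/\lambda^2\}$. This is monotone in the $\mu_k$ and can be compared termwise with $\sum_k\min\{1,(k/k_\lambda)^{-(s+1)\gamma}\}$, which is precisely your computation. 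Your ``first tail term at most $1$ plus $\int_{k_\lambda}^\infty$'' step also handles a non-integer $k_\lambda$ correctly.

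Your side remarks are right as well: $\lambda<1$ is never used, and multiplicities are already built into the indexing of Assumption~\ref{assump:operator_spectral_decay}.
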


\begin{proof}
Since $\calC=(\mcA^\top\mcA)^{-s}$ is a power of $\mcA^\top\mcA$, the two operators share the eigenbasis $\{\psi_k\}$; from $\mcA^\top\mcA\,\psi_k=\lambda_k\psi_k$ it then follows that $\calC\psi_k=\lambda_k^{-s}\psi_k$, so
\[
(\mcA^\top\mcA+\lambda^2\calC)\psi_k
=\bigl(\lambda_k+\lambda^2\lambda_k^{-s}\bigr)\psi_k
=\lambda_k^{-s}\bigl(\lambda_k^{s+1}+\lambda^2\bigr)\psi_k .
\]
Hence $\mcA(\mcA^\top\mcA+\lambda^2\calC)^\dagger\mcA^\top$ has eigenvalues
$\lambda_k\cdot\dfrac{\lambda_k^{s}}{\lambda_k^{s+1}+\lambda^2}=\dfrac{\lambda_k^{s+1}}{\lambda_k^{s+1}+\lambda^2}$, and
\begin{equation}\label{eq:deff_spectral_sum}
d_{\rm eff}^{(s)}(\lambda)=\sum_{k\ge1}\frac{\lambda_k^{s+1}}{\lambda_k^{s+1}+\lambda^2}.
\end{equation}

Set $\mu_k := \lambda_k^{s+1}$ and $g:=(s+1)\gamma$. Assumption~\ref{assump:operator_spectral_decay} gives
\begin{equation}\label{eq:mu_two_sided}
\underline{c}^{\,s+1}\,k^{-g} \;\le\; \mu_k \;\le\; \overline{c}^{\,s+1}\,k^{-g},
\qquad g=(s+1)\gamma>1 ,
\end{equation}
where $g>1$ because $\gamma>\frac{1}{s+1}$ in Assumption~\ref{assump:operator_spectral_decay}. For the counting function $m(\lambda):=\#\{k\ge1:\mu_k\ge\lambda^2\}$, \eqref{eq:mu_two_sided} yields
\begin{equation}\label{eq:counting_bound}
\Bigl(\tfrac{\underline{c}^{\,s+1}}{\lambda^2}\Bigr)^{1/g}-1
\;\le\; m(\lambda) \;\le\;
\Bigl(\tfrac{\overline{c}^{\,s+1}}{\lambda^2}\Bigr)^{1/g}+1 =: K(\lambda).
\end{equation}

For $x\ge0$,
\[
\tfrac12\,\mathbf 1_{\{x\ge1\}}\;\le\;\frac{x}{x+1}\;\le\;\mathbf 1_{\{x\ge1\}}+x\,\mathbf 1_{\{x<1\}} .
\]
Applying this with $x=\mu_k/\lambda^2$ in \eqref{eq:deff_spectral_sum} and summing over $k$,
\begin{equation}\label{eq:deff_sandwich}
\tfrac12\,m(\lambda) \;\le\; d_{\rm eff}^{(s)}(\lambda) \;\le\; m(\lambda) + \frac{1}{\lambda^2}\sum_{k:\,\mu_k<\lambda^2}\mu_k .
\end{equation}

For the lower bound, using the left inequality of \eqref{eq:deff_sandwich}, the left inequality of \eqref{eq:counting_bound}, and $(\underline{c}^{\,s+1})^{1/g}=\underline{c}^{1/\gamma}$,
\[
d_{\rm eff}^{(s)}(\lambda)
\;\ge\;\tfrac12\,m(\lambda)
\;\ge\;\tfrac12\Bigl(\bigl(\tfrac{\underline{c}^{\,s+1}}{\lambda^2}\bigr)^{1/g}-1\Bigr)
\;=\;\frac{\underline{c}^{1/\gamma}}{2}\,\lambda^{-2/g}-\tfrac12 ,
\]
which is the lower bound in \eqref{eq:esti_dim} with $C_1(\underline c,\gamma)=\underline c^{1/\gamma}/2$.

For the upper bound, since $\{k:\mu_k<\lambda^2\}\subset\{k>K(\lambda)\}$ and $g>1$, the integral test gives $\sum_{k>K}k^{-g}\le K^{1-g}/(g-1)$; with $K(\lambda)\ge(\overline{c}^{\,s+1}/\lambda^2)^{1/g}$ and $1+\tfrac{1-g}{g}=\tfrac1g$,
\[
\frac{1}{\lambda^2}\sum_{k:\,\mu_k<\lambda^2}\mu_k
\;\le\;\frac{\overline{c}^{\,s+1}}{(g-1)\lambda^2}\,K(\lambda)^{1-g}
\;\le\;\frac{\overline{c}^{1/\gamma}}{g-1}\,\lambda^{-2/g} .
\]
Combining this with the right inequality of \eqref{eq:deff_sandwich} and $m(\lambda)\le K(\lambda)=\overline{c}^{1/\gamma}\lambda^{-2/g}+1$ from \eqref{eq:counting_bound},
\[
d_{\rm eff}^{(s)}(\lambda)
\;\le\;\overline{c}^{1/\gamma}\Bigl(1+\frac{1}{g-1}\Bigr)\lambda^{-2/g}+1
\;=\;C_2^{(s)}(\overline c,\gamma)\,\lambda^{-\frac{2}{(s+1)\gamma}}+1 ,
\]
where $C_2^{(s)}(\overline c,\gamma)=\overline c^{1/\gamma}\bigl(1+\tfrac{1}{(s+1)\gamma-1}\bigr)$ and $g=(s+1)\gamma$. The lower and upper bounds together give \eqref{eq:esti_dim}.
\end{proof}

The proof of Theorem~\ref{theorem:small_noise} bounds the bias and variance of the Tikhonov estimator separately in the eigenbasis of $\mcA^\top\mcA$ and then optimizes $\lambda$ to balance them. We first record the exact bias-variance decomposition of the mean-squared error (MSE) that serves as its starting point. 

\begin{lemma}[Bias-variance decomposition] \label{lemma:errbd}
Suppose the white-noise model of Section~\ref{subsec:problem_setting} holds, and let $\calC=(\mcA^\top\mcA)^{-s}$ with $s\ge0$. Let $\phi_* = \sum_{i} c_{i,*}\psi_i$ be the ground truth kernel, where $\{\psi_i\}$ are the eigenfunctions of $\mcA^\top \mcA$ associated with eigenvalues $\{\lambda_i\}$, and $c_{i,*} = \langle \psi_i, \phi_* \rangle$. The mean-squared error of the Tikhonov estimator decomposes as
\begin{equation}\label{eq:bv_decomp}
    \E\left[ \| \hat{\phi}_\lambda - \phi_{*} \|_2^2 \mid \mcA \right] =
    \underbrace{\sum_{i \ge 1} \frac{\lambda^4 c_{i,*}^2}{(\lambda_i^{s+1} + \lambda^2)^2}}_{\|e_{bias}\|_2^2}
    \;+\;
    \underbrace{\tfrac{\sigma^2}{M} \sum_{i \ge 1} \frac{\lambda_i^{2s+1}}{(\lambda_i^{s+1} + \lambda^2)^2}}_{\E[\|e_{var}\|_2^2\mid\mcA]}.
\end{equation}
Under Assumption~\ref{assump:operator_spectral_decay}, all eigenvalues satisfy $\lambda_i > 0$, and both series converge absolutely for any $\lambda > 0$ and $\phi_* \in L^2$.
\end{lemma}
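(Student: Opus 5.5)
The plan is to diagonalize everything in the eigenbasis $\{\psi_i\}$ of $\mcA^\top\mcA$ and compute the error exactly, coordinate by coordinate. Since $\calC=(\mcA^\top\mcA)^{-s}$ is a spectral function of $\mcA^\top\mcA$, the computation in the proof of Corollary~\ref{cor:eff_dim_esti} shows that $(\mcA^\top\mcA+\lambda^2\calC)^\dagger$ acts on $\psi_i$ as multiplication by $\lambda_i^{s}/(\lambda_i^{s+1}+\lambda^2)$. Substituting the weak form $\mcA^\top\bb=\mcA^\top\mcA\phi_*+\mcA^\top\boldsymbol{\varepsilon}$ into \eqref{eq:phi_Tikh}, the $i$-th coordinate of the estimator becomes
\[
\langle\hat\phi_\lambda,\psi_i\rangle=\frac{\lambda_i^{s+1}}{\lambda_i^{s+1}+\lambda^2}\,c_{i,*}+\frac{\lambda_i^{s}}{\lambda_i^{s+1}+\lambda^2}\,\langle\boldsymbol{\varepsilon},\mcA\psi_i\rangle .
\]
Here I use $\langle\mcA^\top\boldsymbol{\varepsilon},\psi_i\rangle:=\langle\boldsymbol{\varepsilon},\mcA\psi_i\rangle$, which is the definition of $\mcA^\top$ applied to the random functional $\boldsymbol{\varepsilon}$. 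Subtracting $c_{i,*}$ leaves a deterministic part $-\lambda^2c_{i,*}/(\lambda_i^{s+1}+\lambda^2)$ and a zero-mean random part. Parseval then writes $\|\hat\phi_\lambda-\phi_*\|_2^2$ as the sum over $i$ of the squared coordinate errors. Under Assumption~\ref{assump:operator_spectral_decay}, $\{\psi_i\}$ spans $\ker(\mcA^\top\mcA)^\perp$, where $\phi_*$ lives by Assumption~\ref{assump:source}, and the estimator lies there too, so no kernel component arises.

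Next I take the conditional expectation given $\mcA$. The cross terms vanish because $\E[\langle\boldsymbol{\varepsilon},\mcA\psi_i\rangle\mid\mcA]=0$, which leaves the squared bias $\lambda^4c_{i,*}^2/(\lambda_i^{s+1}+\lambda^2)^2$. For the variance, the covariance identity \eqref{eq:noise_cov} gives
\[
\E[\langle\boldsymbol{\varepsilon},\mcA\psi_i\rangle^2\mid\mcA]=\tfrac{\sigma^2}{M}\|\mcA\psi_i\|_\spaceY^2=\tfrac{\sigma^2}{M}\langle\mcA^\top\mcA\psi_i,\psi_i\rangle=\tfrac{\sigma^2}{M}\lambda_i .
\]
Multiplying by $\lambda_i^{2s}/(\lambda_i^{s+1}+\lambda^2)^2$ yields the stated variance term $\tfrac{\sigma^2}{M}\lambda_i^{2s+1}/(\lambda_i^{s+1}+\lambda^2)^2$. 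The same identity shows that the coordinates are uncorrelated, although only the diagonal terms are needed for the expected squared norm.

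The step needing the most care is justifying the interchange of expectation and infinite sum, because the white noise is not in $\spaceY$. I would handle it by truncating to the first $n$ coordinates, computing exactly, and letting $n\to\infty$ by monotone convergence, since all terms are nonnegative.

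For absolute convergence, the bias series is bounded by $\sum_i c_{i,*}^2=\|\phi_*\|_2^2<\infty$, because the ratio $\lambda^4/(\lambda_i^{s+1}+\lambda^2)^2$ is at most $1$. The variance series is bounded by $\lambda^{-4}\sum_i\lambda_i^{2s+1}$. This sum is finite since $\lambda_i\le\overline c\,i^{-\gamma}$ and $(2s+1)\gamma\ge(s+1)\gamma>1$ when $\gamma>1/(s+1)$, using $2s+1\ge s+1$. Positivity of all $\lambda_i$ holds because the lower bound $\underline c\,i^{-\gamma}>0$ in Assumption~\ref{assump:operator_spectral_decay} applies for every $i$.
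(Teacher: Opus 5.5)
Your proposal is correct and follows essentially the same route as the paper. You diagonalize $(\mcA^\top\mcA+\lambda^2\calC)^\dagger$ in the eigenbasis $\{\psi_i\}$, split the error into a deterministic bias and a zero-mean noise part whose cross term vanishes, and evaluate the variance through the noise covariance; your coordinatewise identity $\E[\langle\boldsymbol{\varepsilon},\mcA\psi_i\rangle^2\mid\mcA]=\tfrac{\sigma^2}{M}\lambda_i$ is the paper's trace computation written out term by term, and your truncation and monotone-convergence step together with the explicit absolute-convergence bounds (bias by $\|\phi_*\|_2^2$, variance by $\lambda^{-4}\sum_i\lambda_i^{2s+1}$ using $(2s+1)\gamma>1$) supply rigor that the paper's proof leaves implicit.
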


\begin{proof}
Substituting $\bb= \mcA\phi_*+\bm\varepsilon $ into $\hat\phi_\lambda =(\mcA^\top \mcA+\lambda^2 \calC)^\dagger\mcA^\top \bb$,  we have
\begin{align*}
\hat{\phi}_\lambda - {\phi}_* &= \left( (\mcA^\top \mcA + \lambda^2 \calC)^\dagger \mcA^\top \mcA - I \right) \phi_* + (\mcA^\top \mcA + \lambda^2 \calC)^\dagger \mcA^\top  \bm\varepsilon := \eb_{bias}+\eb_{var}.
\end{align*}
The expectation of the squared error is
\begin{align*}
\E\left[ \| \hat{\phi}_\lambda - \phi_{*} \|_2^2 |\mcA \right]
& = \E\left[ \| e_{bias} \|_2^2 + 2 e_{bias}^T e_{var} + \| e_{var} \|_2^2|\mcA\right],
\end{align*}
where the cross-term vanishes because $e_{bias}$ is deterministic and
\[
\E[e_{var}|\mcA] = (\mcA^\top \mcA + \lambda^2 \calC)^\dagger \mcA^\top \E[\bm\varepsilon|\mcA] = 0.
\]

We evaluate the two terms on the eigenbasis. Under Assumption~\ref{assump:operator_spectral_decay} all $\lambda_i>0$, so $\calC\psi_i=\lambda_i^{-s}\psi_i$ and
\begin{align*}
(\mcA^\top \mcA + \lambda^2 \calC)\psi_i
&= \bigl(\lambda_i+\lambda^2\lambda_i^{-s}\bigr)\psi_i = \lambda_i^{-s}\bigl(\lambda_i^{s+1}+\lambda^2\bigr)\psi_i, \\
(\mcA^\top \mcA + \lambda^2 \calC)^\dagger \psi_i
&= \frac{\lambda_i^{s}}{\lambda_i^{s+1}+\lambda^2}\,\psi_i.
\end{align*}
For the bias, using $\mcA^\top\mcA\,\phi_*=\sum_i\lambda_i c_{i,*}\psi_i$,
\begin{align*}
e_{bias} &= \sum_{i\ge1}\Bigl(\frac{\lambda_i^{s+1}}{\lambda_i^{s+1}+\lambda^2}-1\Bigr)c_{i,*}\psi_i
= -\sum_{i\ge1}\frac{\lambda^2}{\lambda_i^{s+1}+\lambda^2}\,c_{i,*}\psi_i, \\
\|e_{bias}\|_2^2 &= \sum_{i\ge1}\frac{\lambda^4 c_{i,*}^2}{(\lambda_i^{s+1}+\lambda^2)^2}.
\end{align*}
For the variance, using the noise covariance $\E[\bm\varepsilon\bm\varepsilon^T ]=\tfrac{\sigma^2}{M} I $,
\begin{align*}
\E[\|e_{var}\|_2^2| \mcA]
&= \tfrac{\sigma^2}{M}\,\text{tr} \left( (\mcA^\top \mcA + \lambda^2 \calC)^\dagger \mcA^\top\mcA\, (\mcA^\top \mcA + \lambda^2 \calC)^\dagger \right) \\
&= \tfrac{\sigma^2}{M}\sum_{i\ge1}\frac{\lambda_i\cdot\lambda_i^{2s}}{(\lambda_i^{s+1}+\lambda^2)^2}
= \tfrac{\sigma^2}{M}\sum_{i\ge1}\frac{\lambda_i^{2s+1}}{(\lambda_i^{s+1}+\lambda^2)^2}.
\end{align*}
Adding the two contributions gives \eqref{eq:bv_decomp}. 
\end{proof}

We are now ready to prove Theorem~\ref{theorem:small_noise}.

\begin{proof}[\textbf{Proof of Theorem \ref{theorem:small_noise}}]
Throughout, $\beta$ is the smoothness exponent of Definition~\ref{def:smoothness}; by Lemma~\ref{lem:beta_classes}, $\beta=\tilde\beta$ in the Sobolev-ball case (a) and $\beta=\tilde\beta-\tfrac12$ in the power-law case (b). Only the bias bound is case-dependent; the variance and the bias-variance tradeoff depend on $\beta$ and $\gamma$ alone.

\smallskip
Set $\nu:=\beta/\gamma$ and $c_{i,*}:=\langle\phi_*,\psi_i\rangle$. By Lemma~\ref{lemma:errbd}, the conditional MSE for $\calC=(\mcA^\top\mcA)^{-s}$ splits into bias and variance,
\begin{equation}\label{eq:proof_decomp}
\mathbb{E}\bigl[\|\hat{\phi}_{\lambda}-\phi_{*}\|_{2}^{2}\mid\mcA\bigr]
= \underbrace{\sum_{i\ge1}\frac{\lambda^4 c_{i,*}^2}{(\lambda_i^{s+1}+\lambda^2)^2}}_{=:\,\|e_{bias}\|_2^2}
\;+\; \underbrace{\frac{\sigma^2}{M}\sum_{i\ge1}\frac{\lambda_i^{2s+1}}{(\lambda_i^{s+1}+\lambda^2)^2}}_{=:\,\E[\|e_{var}\|_2^2\mid\mcA]} .
\end{equation}
By Assumption~\ref{assump:operator_spectral_decay}, the eigenvalues obey the two-sided bound
\begin{equation}\label{eq:proof_eig}
\underline c\, i^{-\gamma}\le\lambda_i\le\overline c\, i^{-\gamma},
\qquad\text{hence}\qquad
\underline c^{\,\nu}\, i^{-\beta}\le \lambda_i^{\nu}\le\overline c^{\,\nu}\, i^{-\beta}.
\end{equation}
We bound the two terms of \eqref{eq:proof_decomp} in turn and then choose $\lambda$ to balance them.

\medskip
\noindent\textbf{Bias, Sobolev-ball case (a).}
Under Assumption~\ref{assump:source}(a), writing $c_{i,*}=i^{-\beta}\xi_i$ with the residual sequence $\xi_i:=i^\beta c_{i,*}$ gives $\sum_{i\ge1}\xi_i^2\le\mcE$ directly from \eqref{eq:class_sob}, and \eqref{eq:proof_eig} gives $i^{-\beta}\le\underline c^{-\nu}\lambda_i^\nu$. Substituting both into the bias sum in \eqref{eq:proof_decomp},

\begin{equation}\label{eq:bias_sob_sup}
\|e_{bias}\|_2^2
\;=\; \lambda^4\sum_{i\ge1}\frac{c_{i,*}^2}{(\lambda_i^{s+1}+\lambda^2)^2}
\;\le\; \underline c^{-2\nu}\,\mcE\,\lambda^4\,\sup_{i\ge1}\Bigl(\frac{\lambda_i^{\nu}}{\lambda_i^{s+1}+\lambda^2}\Bigr)^2 .
\end{equation}
Setting $x:=\lambda_i^{s+1}$ and $a:=\nu/(s+1)=\beta/[(s+1)\gamma]$ rewrites the inner ratio as a single-variable function,
\[
\frac{\lambda_i^{\nu}}{\lambda_i^{s+1}+\lambda^2} \;=\; h(\lambda_i^{s+1}), \qquad h(x):=\frac{x^{a}}{x+\lambda^2} .
\]
The exponent $a$ separates the two regimes of Theorem~\ref{theorem:small_noise}: $a<1$ corresponds to $\gamma>\beta/(s+1)$ (under-rough) and $a>1$ to $\gamma<\beta/(s+1)$ (over-rough). We bound $\sup_{i\ge1}h(\lambda_i^{s+1})$ in each.

If $a<1$, then $h(x)\to 0$ as $x\to 0^+$ and as $x\to\infty$, so $h$ has an interior maximum on $(0,\infty)$. Solving $h'(x)=0$ gives the critical point $x_\ast=\tfrac{a}{1-a}\lambda^2$, and
\[
\sup_{i\ge1}h(\lambda_i^{s+1}) \;\le\; \sup_{x>0}h(x) \;=\; h(x_\ast) \;=\; C_a\,\lambda^{2(a-1)},
\qquad C_a:=\Bigl(\tfrac{a}{1-a}\Bigr)^{a}(1-a) .
\]
If $a>1$, then $h'(x)>0$ for all $x>0$, so $h$ is strictly increasing on $(0,\infty)$. The supremum over the discrete set $\{\lambda_i^{s+1}\}_{i\ge1}\subset(0,\lambda_1^{s+1}]$ is therefore attained at $i=1$,
\[
\sup_{i\ge1}h(\lambda_i^{s+1}) \;=\; h(\lambda_1^{s+1}) \;\le\; \frac{(\lambda_1^{s+1})^{a}}{\lambda_1^{s+1}} \;=\; (\lambda_1^{s+1})^{a-1} \;=:\; C_a' ,
\]
which is independent of $\lambda$.

The constants $C_a,C_a'$ depend only on $s,\beta,\gamma$ (and $\lambda_1$ for $C_a'$). Substituting them back into \eqref{eq:bias_sob_sup} and using $2a=2\beta/[(s+1)\gamma]$ yields the two-regime bias bound
\begin{equation}\label{eq:bias_bound}
\|e_{bias}\|_2^2 \;\le\;
\begin{cases}
C_{bias}\,\lambda^{4\beta/[(s+1)\gamma]}, & \gamma>\beta/(s+1)\quad(\text{under-rough}),\\[4pt]
C_{bias}'\,\lambda^{4}, & \gamma<\beta/(s+1)\quad(\text{over-rough}).
\end{cases}
\end{equation}
At the boundary $\gamma=\beta/(s+1)$ (equivalently $a=1$, $\nu=s+1$), the two branches of \eqref{eq:bias_bound} have the same exponent $\lambda^{4a}|_{a=1}=\lambda^4$, and the supremum in \eqref{eq:bias_sob_sup} is bounded directly: $h(x)=x/(x+\lambda^2)$ is increasing with $h(x)\le 1$, so $\sup_{i\ge1}h(\lambda_i^{s+1})\le 1$ and $\|e_{bias}\|_2^2\le \underline c^{-2(s+1)}\,\mcE\,\lambda^4$. The Sobolev-ball bias bound therefore extends across $a=1$ without modification.

\medskip
\noindent\textbf{Bias, power-law case (b).}
Now the residual after factoring $i^{-\beta}$ is not square-summable, so we bound the bias sum in \eqref{eq:proof_decomp} directly. Assumption~\ref{assump:source}(b) and \eqref{eq:proof_eig} give
\[
c_{i,*}^2\le C_\phi\, i^{-(2\beta+1)},
\qquad
\lambda_i^{s+1}\ge \underline c^{\,s+1}\, i^{-g},
\qquad g:=(s+1)\gamma .
\]
Let $i_\ast:=\lambda^{-2/g}$ be the index at which $i^{-g}=\lambda^2$. Bounding the denominator below by $\underline c^{\,2(s+1)} i^{-2g}$ for $i\le i_\ast$ and by $\lambda^4$ for $i> i_\ast$,
\begin{equation}\label{eq:algb_split}
\|e_{bias}\|_2^2=\lambda^4\sum_{i\ge1}\frac{c_{i,*}^2}{(\lambda_i^{s+1}+\lambda^2)^2}
\;\le\; C\,\lambda^4\Bigl(\underbrace{\sum_{i\le i_\ast} i^{\,2g-2\beta-1}}_{(\mathrm{I})}
\;+\;\lambda^{-4}\underbrace{\sum_{i> i_\ast} i^{-(2\beta+1)}}_{(\mathrm{II})}\Bigr),
\end{equation}
with $C=C(C_\phi,\underline c,s)$. In the under-rough regime $\gamma>\beta/(s+1)$ we have $2g-2\beta-1>-1$ and $2\beta+1>1$, so $(\mathrm{I})$ is a partial sum of a divergent series (growing with $i_\ast$) while $(\mathrm{II})$ is the tail of a convergent series (vanishing with $i_\ast$). Applying the integral test to each,
\begin{align*}
(\mathrm{I}) \;=\; \sum_{i\le i_\ast} i^{\,2g-2\beta-1}
&\;\le\; \int_0^{i_\ast} x^{\,2g-2\beta-1}\,dx \;=\; \frac{i_\ast^{\,2(g-\beta)}}{2(g-\beta)} , \\
(\mathrm{II}) \;=\; \sum_{i> i_\ast} i^{-(2\beta+1)}
&\;\le\; \int_{i_\ast}^{\infty} x^{-(2\beta+1)}\,dx \;=\; \frac{i_\ast^{-2\beta}}{2\beta} .
\end{align*}
Substituting $i_\ast=\lambda^{-2/g}$ converts both bounds into powers of $\lambda$,
\begin{equation}\label{eq:algb_IandII}
(\mathrm{I})\;\le\; C\,i_\ast^{\,2(g-\beta)} \;=\; C\,\lambda^{-4+4\beta/g} ,
\qquad
(\mathrm{II})\;\le\; C\,i_\ast^{-2\beta} \;=\; C\,\lambda^{4\beta/g} .
\end{equation}
Plugging \eqref{eq:algb_IandII} into \eqref{eq:algb_split} gives
\[
\|e_{bias}\|_2^2 \;\le\; C\,\lambda^4\bigl(\lambda^{-4+4\beta/g}+\lambda^{-4}\cdot\lambda^{4\beta/g}\bigr)
\;=\; C\,\lambda^{4\beta/g} \;=\; C\,\lambda^{4\beta/[(s+1)\gamma]} .
\]
In the over-rough regime $\gamma<\beta/(s+1)$, however, $2g-2\beta-1<-1$, so $(\mathrm{I})\le\sum_{i\ge1} i^{2g-2\beta-1}\le C$ is uniformly bounded and $(\mathrm{II})$ remains bounded as before. The sum $(\mathrm{I})$ therefore dominates the bracket in \eqref{eq:algb_split}, yielding
\[
\|e_{bias}\|_2^2 \;\le\; C\,\lambda^4 .
\]
At the boundary $\gamma=\beta/(s+1)$ (equivalently $g=\beta$), the exponent in $(\mathrm{I})$ becomes exactly $2g-2\beta-1=-1$, so $(\mathrm{I})$ is the harmonic-type sum
\[
(\mathrm{I}) \;=\; \sum_{i\le i_\ast}\frac{1}{i} \;\le\; 1+\log i_\ast \;=\; 1 + \tfrac{2}{g}\,|\log\lambda| ,
\]
which grows logarithmically rather than as a power of $\lambda$. Plugging this into \eqref{eq:algb_split} together with the unchanged bound on $(\mathrm{II})$ gives
\[
\|e_{bias}\|_2^2 \;\le\; C\,\lambda^4\bigl(1+|\log\lambda|\bigr) ,
\]
so the boundary bias is the common value $\lambda^4$ of \eqref{eq:bias_bound} multiplied by an additional $|\log\lambda|$ factor.

Hence the bias bound \eqref{eq:bias_bound} holds for the power-law class $\mathcal{F}_{\mathrm{pow}}^{\tilde\beta}$ away from the boundary, with the additional $|\log\lambda|$ factor noted above at $\gamma=\beta/(s+1)$.

\medskip
\noindent\textbf{Variance.}
Factor the summand of the variance term in \eqref{eq:proof_decomp} as
\[
\frac{\lambda_i^{2s+1}}{(\lambda_i^{s+1}+\lambda^2)^2}
=\frac{\lambda_i^{s+1}}{\lambda_i^{s+1}+\lambda^2}\cdot\frac{\lambda_i^{s}}{\lambda_i^{s+1}+\lambda^2}.
\]
The second factor is maximized at $\lambda_i^{s+1}=s\lambda^2$, so
\[
\frac{\lambda_i^{s}}{\lambda_i^{s+1}+\lambda^2}\;\le\; c_s\,\lambda^{-2/(s+1)},
\qquad c_s=c_s(s),\ c_0=1 .
\]
Summing over $i$ and recognizing $\sum_{i}\lambda_i^{s+1}/(\lambda_i^{s+1}+\lambda^2)=d_{\rm eff}^{(s)}(\lambda)$, Corollary~\ref{cor:eff_dim_esti} gives
\begin{equation}\label{eq:var_gen}
\E[\|e_{var}\|_2^2\mid\mcA]
\;\le\; c_s\,\frac{\sigma^2}{M}\,\lambda^{-2/(s+1)}\,d_{\rm eff}^{(s)}(\lambda)
\;\le\; C_{var}\,\sigma^2\,\lambda^{-\frac{2}{s+1}\left(1+\frac{1}{\gamma}\right)},
\end{equation}
where $C_{var}=c_s\,C_2^{(s)}(\overline c,\gamma)/M$.

\medskip
\noindent\textbf{Bias-variance tradeoff.}
Adding \eqref{eq:bias_bound} and \eqref{eq:var_gen} and minimizing over $\lambda$ balances bias against variance. In the under-rough regime $\gamma>\beta/(s+1)$, we have
\[
\lambda_* \asymp \sigma^{\frac{(s+1)\gamma}{2\beta+\gamma+1}},
\qquad
\E[\|\hat\phi_{\lambda_*}-\phi_*\|_2^2\mid\mcA] \le C_1\,\sigma^{\frac{4\beta}{2\beta+\gamma+1}} .
\]
In the over-rough regime $\gamma<\beta/(s+1)$, where the bias saturates at $\lambda^4$,
\[
\lambda_* \asymp \sigma^{\frac{s+1}{(2s+3)+1/\gamma}},
\qquad
\E[\|\hat\phi_{\lambda_*}-\phi_*\|_2^2\mid\mcA] \le C_2\,\sigma^{\frac{4(s+1)}{(2s+3)+1/\gamma}},
\]
where the constants $C_1$ and $C_2$ depend on the sample size $M$, $\mcE$, $\beta$, $\gamma$, $s$, and the spectral constants $\underline{c},\overline{c}$ of Assumption~\ref{assump:operator_spectral_decay}. These are the two branches of \eqref{eq:opti_lambda_rate}.

At the boundary $\gamma=\beta/(s+1)$, the two rate exponents above coincide,
\[
\frac{4\beta}{2\beta+\gamma+1} \;=\; \frac{4(s+1)}{(2s+3)+1/\gamma} ,
\]
and the boundary rate is obtained as follows. For $\phi_*\in\mathcal{F}_{\mathrm{Sob}}^{\tilde\beta}$, the bias bound \eqref{eq:bias_bound} extends to $a=1$ unchanged, so the common rate is attained as stated. For $\phi_*\in\mathcal{F}_{\mathrm{pow}}^{\tilde\beta}$, the boundary bias is $\|e_{bias}\|_2^2\le C\lambda^4|\log\lambda|$; balancing this against \eqref{eq:var_gen} with the dominant power-of-$\lambda$ scaling gives
\[
\lambda_* \asymp \sigma^{\frac{s+1}{(2s+3)+1/\gamma}},
\qquad
\E[\|\hat\phi_{\lambda_*}-\phi_*\|_2^2\mid\mcA] \le C_3\,\sigma^{\frac{4\beta}{2\beta+\gamma+1}}\,|\log\sigma| ,
\]
matching the non-boundary rate of \eqref{eq:opti_lambda_rate} up to a multiplicative $|\log\sigma|$. For consistency, since Theorem~\ref{theorem:small_noise} states the rate only on $\gamma\ne\beta/(s+1)$, we do not include this boundary case in its statement and leave it aside in what follows.
\end{proof}

\end{document}